\newtheorem{thm}{Theorem}[section]
\newtheorem{thm0}{Theorem}
\newtheorem{prop}[thm]{Proposition}
\newtheorem{lem}[thm]{Lemma}
\newtheorem{cor}[thm]{Corollary}
\newtheorem{cor0}[thm0]{Corollary}
\theoremstyle{definition}
\newtheorem{defin}[thm]{Definition}
\theoremstyle{remark}
\numberwithin{equation}{section}
\providecommand\ufootnote[1]{{\let\thefootnote\relax\footnote[0]{#1}}}
\newcommand{\ac}{\mathcal A}
\newcommand{\bc}{\mathcal B}
\newcommand{\cc}{\mathcal C}
\newcommand{\dc}{\mathcal D}
\newcommand{\ec}{\mathcal E}
\newcommand{\hc}{\mathcal H}
\newcommand{\lc}{\mathcal L}
\newcommand{\rb}{\mathbb R}
\newcommand{\nb}{\mathbb N}
\newcommand{\mb}{\mathbb M}
\newcommand{\cb}{\mathbb C}
\newcommand{\ci}{{\mathcal C}^\infty}
\newcommand{\ol}{\overline}
\newcommand{\pa}{\partial}
\newcommand{\opa}{\ol\partial}
\newcommand{\wt}{\widetilde}
\begin{document}

\title{Stability of the embeddability under perturbations of the CR structure for compact CR manifolds}

\author{Christine LAURENT-THI\'{E}BAUT}

\date{}

\maketitle

\begin{abstract}
We study the stability of the embeddability
of compact $2$-concave CR manifolds in complex manifolds under small
horizontal perturbations of the CR structure.
\end{abstract}

\ufootnote{\hskip-0.6cm UJF-Grenoble 1, Institut Fourier, Grenoble,
F-38041, France\newline CNRS UMR 5582, Institut Fourier,
Saint-Martin d'H\`eres, F-38402, France}

\ufootnote{\hskip-0.6cm  {\it A.M.S. Classification}~: 32V30, 32V05,
32V20.\newline {\it Key words}~: CR structures, embeddings, homotopy
formula.}

\bibliographystyle{amsplain}
%\setcounter{page}{-1}
%\input resume

%\tableofcontents
%\input plong0new
The study of local and global embeddability of CR manifolds in
complex manifolds has occupied a large number of mathematicians in
the last forty years. Most of the results concern the case of
strictly pseudoconvex CR manifolds of hypersurface type, very few is
known in the other cases.

The stability of the embeddability property of a CR manifold $M$ was
first studied by N. Tanaka, \cite{Ta}, for strictly pseudoconvex CR
manifolds of hypersurface type and real dimension greater or equal to
$5$ embedded in some $\cb^N$.
Few years later R. S. Hamilton, \cite{Ha1} et \cite{Ha2}, was
interested in the
stability of the embeddability property for
hypersurfaces provided the perturbation of the original CR structure
is the restriction of a perturbation of a complex structure on some
complex manifold $X$ of which $M$ is the boundary. It was proved in
\cite{Ki1} and \cite{Ki2} that this last condition is satisfied for
$2$-concave hypersurfaces with a perturbation preserving the contact
structure.

Here we consider CR manifolds of higher codimension type
with mixed Levi signature and we are interested in the stability of
the embeddability of such CR manifolds in complex manifolds under
small perturbations of the CR structure preserving the complex
tangent bundle.

Our main result is the following theorem:

\begin{thm0}\label{perturb}
Let $(\mb,H_{0,1}\mb)$ be an abstract compact CR
manifold of class $\cc^\infty$, which is  smoothly embeddable as a
CR manifold in a complex manifold $X$. Assume that $(\mb,H_{0,1}\mb)$
is $2$-concave. Let $\ec_0$ be a smooth
embedding  from $\mb$ into $X$ and denote by $M_0$ the image of $\mb$
by the embedding $\ec_0$. Assume that either the $\opa_b$-group of
cohomology $H^{0,1}(M_0,T_{1,0}X_{|_{M_0}})=0$ or $X=\cb P^n$, and let $\widehat
H_{0,1}\mb$ be an horizontal perturbation of $H_{0,1}\mb$ defined by
a $(0,1)$-form $\Phi\in\cc_{0,1}^{l+3}(\mb,H_{1,0}\mb)$, $l\geq 1$.

Then there exists a positive real number $\delta$ such that if
$\|\Phi\|_{l+3}<\delta$, then the CR manifold $(\mb,\widehat
H_{0,1}\mb)$ is embeddable in $X$ as a CR submanifold of class
$\cc^l$.
\end{thm0}

Looking to the proof (see section \ref{s4}), one can easily see that $\cb P^n$ can be replaced by $\cb^N\times\cb P^{n_1}\times\dots\times\cb P^{n_k}$ in Theorem \ref{perturb}
\medskip

In the case of a general complex manifold $X$, Theorem \ref{perturb} was proved
by P. L. Polyakov in \cite{Poly2}
under the stronger hypothesis that $\mb$ is $3$-concave,
and $\ec_0$ is a generic embedding,
but with a larger loss of regularity. We have to notice that, compare to
our situation, Polyakov has no restriction on the kind of the
perturbation of the CR structure.
For example let us consider the CR manifold $M=\{(z,\zeta)\in\cb P^n\times\cb
P^n~|~\sum_{j=1}^{n+1} z_j\ol\zeta_j=0\}$ which is $2$-concave if $n\geq 3$, then
by Theorem \ref{perturb} any sufficiently small horizontal CR perturbation of $\mb$ is still embeddable in $\cb P^n\times\cb
P^n$, but,  if $n\geq 4$, $\mb$ satisfies
the vanishing cohomological condition  (see \cite{Poly2}),
and is $3$-concave, so by Polyakov's result any sufficiently small CR perturbation (without any restriction) of $\mb$ is still embeddable in $\cb P^n\times\cb
P^n$.

Note that our hypothesis
of horizontality of
the perturbation allows us to work with anisotropic H\"older spaces
and to avoid a Nash-Moser process in the proof of the theorem.
\medskip

Finally note that in the  case of a Levi non degenerate hypersurface, by a theorem of Gray \cite{Gr}
saying that all contact structures on a compact manifold near a fixed
contact structure are equivalent, any perturbation of the CR structure can be reduced to an
horizontal one (in that case the horizontal perturbations are the perturbations which preserve the natural contact structure). So we can immediately derive from Theorem \ref{perturb} the following sharp corollary

\begin{cor0}
Let $\mb$ be a $2$-concave non degenerate real
quadric of $\cb P^n$ defined by
$$M=\{z\in\cb P^n~|~
\sum_{j=0}^p|z_j|^2-\sum_{j=1}^q|z_{p+j}|^2=0\},$$
with $p\geq 2$, $q\geq 3$ and $p+q=n$. If we equip $\mb$ with a sufficiently small perturbation of class $\cc^{l+3}$ of the CR structure induced by the complex structure of $\cb P^n$, then the new CR manifold is still embeddable in $\cb P^n$ as a CR submanifold of class $\cc^l$.
\end{cor0}

The case of the $1$-concave non degenerate quadrics was studied by Biquard in \cite{Bi}. He proved that the space of obstruction to the stability of the embeddability is infinite dimensional.
\medskip

The paper is organized as follows:

Sections \ref{s1} and \ref{s2} consist in the description of the general setting
and in the definitions of the the main objects used in this paper.

Section \ref{s3} is devoted to the proof of Theorem \ref{perturb} in the general case of a complex manifold.
We first remark that the problem can be reduced to the solvability
of some tangential Cauchy-Riemann equation for the perturbed
structure. Using global homotopy formulas with good estimates we are
lead to a fixed point theorem, which gives the solution.

In section \ref{s4}, we consider the case when $X=\cb P^n$.

\section{CR Structures}\label{s1}

Let $\mb$ be a $\cc^l$-smooth, $l\geq 1$, paracompact differential manifold, we
denote by $T\mb$ the tangent bundle of $\mb$ and by $T_\cb
\mb=\cb\otimes T\mb$ the complexified tangent bundle.

\begin{defin}\label{structure}
An \emph {almost CR structure} on $\mb$ is a subbundle $H_{0,1}\mb$ of
$T_\cb \mb$ such that $H_{0,1}\mb\cap \ol{H_{0,1}\mb}=\{0\}$.

If the almost CR structure is integrable, i.e. for all
$Z,W\in\Gamma(\mb,H_{0,1}\mb)$ then $[Z,W]\in\Gamma(\mb,H_{0,1}\mb)$ , then it is called a \emph {CR structure}.

If $H_{0,1}\mb$ is a CR structure, the pair $(\mb,H_{0,1}\mb)$ is
called an \emph {abstract CR manifold}.

The \emph{CR dimension} of $\mb$ is defined by CR-dim
$\mb=\rm{rk}_\cb~H_{0,1}\mb$.
\end{defin}

We set $H_{1,0}\mb=\ol{H_{0,1}\mb}$ and $H\mb=H_{1,0}\mb\oplus H_{0,1}\mb$.

\begin{defin}\label{morphisme}
Let $(\mb,H_{0,1}\mb)$ and $(\mb',H_{0,1}\mb')$ be two CR manifolds and $F~:~\mb\to\mb'$ a $\cc^1$-map. The map $F$ is called a \emph{CR map} if and only if for each $x\in\mb$, $dF((H_{0,1}\mb)_x)\subset (H_{0,1}\mb')_{F(x)}$.
\end{defin}

In particular, if $(\mb,H_{0,1}\mb)$ is a CR manifold and $f$ a complex valued function, then $f$ is a CR function if and only if for any $\ol L\in H_{0,1}\mb$ we have $\ol L f=0$.
\medskip

We denote by $H^{0,1}\mb$ the dual bundle $(H_{0,1}\mb)^*$ of $H_{0,1}\mb$.
Let $\Lambda^{0,q}\mb=\bigwedge^q(H^{0,1}\mb)$, then
$\cc^s_{0,q}(\mb)=\Gamma^s(\mb,\Lambda^{0,q}\mb)$ is called the
space of $(0,q)$-forms of class $\cc^s$, $0\leq s\leq l$ on $\mb$.

If the almost CR structure is a CR structure, i.e. if it is
integrable, and if $s\geq 1$, then we can define an operator
\begin{equation}
\opa_b~:~\cc^s_{0,q}(\mb)\to \cc^{s-1}_{0,q+1}(\mb)
\end{equation}
called the \emph{tangential Cauchy-Riemann operator} by setting
$\opa_b f=df_{|_{H_{0,1}\mb\times\dots\times H_{0,1}\mb}}$. It
satisfies $\opa_b\circ\opa_b=0$ and a complex valued function $f$ is a CR function if and only if $\opa_b f=0$.

For $X\in H\mb$, let $X^{0,1}$ denote the projection of $X$ on $H_{0,1}\mb$ and $X^{1,0}$ its projection on $H_{1,0}\mb$.

For $(0,q)$-forms with values in the vector bundle $H_{1,0}\mb$, we define the tangential Cauchy-Riemann operator by setting
$$\opa_b u(\ol L)=[\ol L, u]^{1,0},$$
if $u\in\Gamma(\mb,H_{1,0}\mb)$ and $\ol L\in\Gamma(\mb,H_{0,1}\mb)$ and
\begin{align*}
\opa_b\theta(\ol L_1,\dots,\ol L_{q+1})&=\sum_{j=1}^{q+1}(-1)^{j+1}\opa_b(\theta(\ol L_1,\dots,\widehat{\ol L_j},\dots,\ol L_{q+1}))(\ol L_j)\\&+\sum_{j<k}(-1)^{j+k}\theta([\ol L_j,\ol L_k],\ol L_1,\dots,\widehat{\ol L_j},\dots,\widehat{\ol L_k},\dots,\ol L_{q+1}),
\end{align*}
if $\theta\in \ci_{0,q}(\mb,H_{1,0}\mb)$ and $\ol L_1,\dots,\ol L_{q+1}\in \Gamma(\mb,H_{0,1}\mb)$. In particular for $q=1$, we get
$$\opa_b\theta(\ol L_1,\ol L_2)=[\ol L_1,\theta(\ol L_2)]^{1,0}-[\ol L_2,\theta(\ol L_1)]^{1,0}-\theta([\ol L_1,\ol L_2]).$$

\medskip

The annihilator $H^0\mb$ of $H\mb=H_{1,0}\mb\oplus H_{0,1}\mb$ in
$T_\cb^*\mb$ is called the \emph{characteristic bundle} of $\mb$.
Given $p\in\mb$, $\omega\in H^0_p\mb$ and $X,Y\in H_p\mb$, we choose
$\wt\omega\in\Gamma(\mb,H^0\mb)$ and $\wt X,\wt Y\in
\Gamma(\mb,H\mb)$ with $\wt\omega_p=\omega$, $\wt X_p=X$ and $\wt
Y_p=Y$. Then $d\wt\omega(X,Y)=-\omega([\wt X,\wt Y])$. Therefore we can
associate to each $\omega\in H^0_p\mb$ an hermitian form
\begin{equation}\label{levi}
L_\omega(X)=-i\omega([\wt X,\ol{\wt X}])
\end{equation}
on $H_p\mb$. This is called the \emph{Levi form} of $\mb$ at
$\omega\in H^0_p\mb$.

In the study of the $\opa_b$-complex two important geometric
conditions were introduced for CR manifolds of real dimension $2n-k$
and CR-dimension $n-k$. The first one by Kohn in the hypersurface
case, $k=1$, the condition Y(q), the second one by Henkin in
codimension $k$, $k\ge 1$, the $q$-concavity.

A CR manifold $\mb$ satisfies Kohn's condition $Y(q)$ at a point
$p\in\mb$ for some $0\le q\le n-1$, if the Levi form of $\mb$ at $p$
has at least $\max(n-q,q+1)$ eigenvalues of the same sign or at
least $\min(n-q,q+1)$ eigenvalues of opposite signs.

A CR manifold $\mb$ is said to be \emph{$q$-concave} at $p\in\mb$
for some $0\le q\le n-k$, if the Levi form $L_\omega$ at $\omega\in
H^0_p\mb$ has at least $q$ negative eigenvalues on $H_p\mb$ for
every nonzero $\omega\in H^0_p\mb$.

In \cite{ShWa} the condition Y(q) is extended to arbitrary
codimension.
\begin{defin}\label{y(q)}
An abstract CR manifold is said to satisfy \emph{condition Y(q)} for
some $1\le q\le n-k$ at $p\in\mb$ if the Levi form $L_\omega$ at
$\omega\in H^0_p\mb$ has at least $n-k-q+1$ positive eigenvalues or
at least $q+1$ negative eigenvalues on $H_p\mb$ for every nonzero
$\omega\in H^0_p\mb$.
\end{defin}

Note that in the hypersurface type case, i.e. $k=1$, this condition is
equivalent to the classical condition Y(q) of Kohn for
hypersurfaces and in particular if the CR structure is strictly peudoconvex,
i.e. the Levi form is positive definite or negative definite,
condition Y(q) holds for all $1\leq q<n-1$. Moreover, if $\mb$ is
$q$-concave at $p\in\mb$, then
$q\le (n-k)/2$ and condition Y(r) is satisfied at $p\in\mb$ for any
$0\le r\le q-1$ and $n-k-q+1\le r\le n-k$.

\begin{defin}\label{plongement}
Let $(\mb, H_{0,1}\mb)$ be an abstract CR manifold, $X$ be  a complex
manifold  and $F~:~\mb\to X$ be an embedding of class $\cc^l$, then
$F$ is called a \emph{CR embedding} if $dF(H_{0,1}\mb)$ is a subbundle
of the bundle $T_{0,1}X$ of the holomorphic vertor fields of $X$ and
$dF(H_{0,1}\mb)=T_{0,1}X\cap T_\cb F(\mb)$.
\end{defin}

Let $F$ be a CR embedding of an abstract CR manifold into a complex
manifold $X$ and set $M=F(\mb)$, then $M$ is a CR manifold with the
CR structure $H_{0,1}M=T_{0,1}X\cap T_\cb M$.

Let $U$ be a coordinate domain in $X$, then
$F_{|_{F^{-1}(U)}}=(f_1,\dots ,f_N)$, with $N=\rm{dim}_\cb X$, and
$F$ is a CR embedding if and only if, for all $1\leq j\leq N$,
$\opa_b f_j=0$.

A CR embedding is called
\emph{generic} if $\rm{dim}_\cb X- \rm{rk}_\cb H_{0,1}M=\rm{codim}_\rb
M$.

\section{Perturbation of CR structures}\label{s2}

In this section we shall define the notion of perturbation of a given CR structure on a manifold and introduce some new complex associated to the perturbed structure.

\begin{defin}\label{perturbation}
An almost CR structure $\widehat H_{0,1}\mb$ on $\mb$ is said to be \emph{a perturbation of finite distance} to a given CR structure $H_{0,1}\mb$ if
$\widehat H_{0,1}\mb$ can be represented as a graph in $T_\cb\mb$ over $H_{0,1}\mb$. It is called an
\emph{horizontal perturbation} of the CR structure $H_{0,1}\mb$ if
it can be represented as a graph in the complex
tangent bundle $H\mb=H_{1,0}\mb\oplus H_{0,1}\mb$ over $H_{0,1}\mb$,
which means that there exists $\Phi\in\cc_{0,1}(\mb,H_{1,0}\mb)$
such that
\begin{equation}\label{perturbation horizontale}
\widehat H_{0,1}\mb=\{\ol W\in T_\cb\mb~|~\ol W=\ol Z-\Phi(\ol Z),
\ol Z\in H_{0,1}\mb\}.
\end{equation}
\end{defin}

The horizontal perturbation $\widehat H_{0,1}\mb$ of the CR
structure $H_{0,1}\mb$ will be integrable if and only if given
$\ol L_1,\ol L_2\in \Gamma(\mb, H_{0,1}\mb)$ and $\ol L_i^\Phi=\ol L_i-\Phi(\ol L_i)$, $i=1,2$, there exists $\ol W$ such that $[\ol L_1^\Phi,\ol L_2^\Phi]=\ol W-\Phi(\ol W)=\ol W^\Phi$, which is equivalent to
\begin{equation}\label{integrability}
[\ol L_1^\Phi,\ol L_2^\Phi]^{1,0}=-\Phi([\ol L_1^\Phi,\ol L_2^\Phi]^{0,1}),
\end{equation}
if $X^{0,1}$ denotes the projection on $H_{0,1}\mb$ and $X^{1,0}$ the projection on $H_{1,0}\mb$ of $X\in H\mb$.

 Since
\begin{align*}
[\ol L_1^\Phi,\ol L_2^\Phi]&=[\ol L_1,\ol L_2]-[\Phi(\ol L_1),\ol L_2]-[\ol L_1,\Phi(\ol L_2)]+[\Phi(\ol L_1),\Phi(\ol L_2)]\\
&=[\ol L_1,\ol L_2]-\Phi([\ol L_1,\ol L_2])-([\ol L_1,\Phi(\ol L_2)]-[\ol L_2,\Phi(\ol L_1)]-\Phi([\ol L_1,\ol L_2]))\\
&+[\Phi(\ol L_1),\Phi(\ol L_2)]
\end{align*}
we get

$$[\ol L_1^\Phi,\ol L_2^\Phi]^{1,0}=-\Phi([\ol L_1,\ol L_2])-\opa_b\Phi(\ol L_1,\ol L_2)+[\Phi(\ol L_1),\Phi(\ol L_2)]$$

and the equation \eqref{integrability} is equivalent to
$$\opa_b\Phi(\ol L_1,\ol L_2)=[\Phi(\ol L_1),\Phi(\ol L_2)]-\Phi([\ol L_1,\Phi(\ol L_2)]^{0,1})+\Phi([\ol L_2,\Phi(\ol L_1)]^{0,1}),$$
which we simply write in the form
\begin{equation}\label{integrability2}
\opa_b^{H_{1,0}\mb}\Phi=\frac{1}{2}[\Phi,\Phi].
\end{equation}

Note that if $\widehat H_{0,1}\mb$ is an integrable horizontal
perturbation of $H_{0,1}\mb$, the space
$H\widehat\mb=H_{1,0}\widehat\mb\oplus H_{0,1}\widehat\mb$ coincides
with the space $H\mb$ and consequently the two abstract CR manifolds
$\mb$ and $\widehat\mb$ have the same characteristic bundle and
hence the same Levi form. This implies in particular that if $\mb$
satisfies condition Y(q) at each point, then $\widehat\mb$ satisfies
also condition Y(q) at each point and that if $\mb$ is $q$-concave
then $\widehat\mb$ is also $q$-concave.
\medskip

Assume $\mb$ is an abstract CR manifold  and $\widehat H_{0,1}\mb$
is an integrable horizontal perturbation of the original CR
structure $H_{0,1}\mb$ on $\mb$. Let $f$ be a complex valued function on $\mb$, then $f$ will be CR for the new structure $\widehat H_{0,1}\mb$ if and only if
$$\forall~ \ol L^\Phi\in\Gamma(\mb, \widehat H_{0,1}\mb),~ \ol L^\Phi f=0~\Leftrightarrow~\opa_b^\Phi f=df_{|_{\widehat H_{0,1}\mb}}=0.$$
But going back to the definition of $\widehat H_{0,1}\mb$, this means
$$\forall~ \ol L\in\Gamma(\mb, H_{0,1}\mb),~(\ol L -\Phi(\ol L))f=0~\Leftrightarrow~d''_\Phi f=(\opa_b -\Phi\lrcorner \pa_b)f=0,$$
where $\opa_b$ is the tangential Cauchy-Riemann operator associated to
the original CR structure $H_{0,1}\mb$ and $\Phi\lrcorner \pa_b f$ is the $(0,1)$-form for the initial structure defined by $\Phi\lrcorner \pa_b f(\ol L)=\Phi(\ol L)f$ for any section $\ol L\in\Gamma(\mb, H_{0,1}\mb)$.

Let us consider the two operators $$\opa_b^\Phi~:~\cc^1(\mb)\to\Gamma^0(\mb,\widehat H^{0,1}\mb)\quad {\rm and}\quad d''_\Phi~:\cc^1(\mb)\to\Gamma^0(\mb, H^{0,1}\mb),$$
then a fonction $f$ is CR for the new structure $\widehat H_{0,1}\mb$ if and only if $\opa_b^\Phi f=0$ or equivalently $d''_\Phi f=0$.

We are lead to consider two pseudo-complex~: $$(\ci_{0,*}(\widehat\mb),\opa_b^\Phi)\quad {\rm and}\quad (\ci_{0,*}(\mb),d''_\Phi),$$
where $\ci_{0,q}(\widehat\mb)=\Gamma(\mb,\bigwedge^q(\widehat H^{0,1}\mb))$ and $\ci_{0,q}(\mb)=\Gamma(\mb,\bigwedge^q(H^{0,1}\mb))$.
In degree $q\geq 1$, since the two structures $H^{0,1}\mb$ and $\widehat H^{0,1}\mb$ are integrable, the operators can be defined in the following way~: if $\alpha\in \ci_{0,q}(\widehat\mb)$ and $\ol L_1^\Phi,\dots,\ol L_{q+1}^\Phi\in\Gamma(\mb, \widehat H_{0,1}\mb)$, we set
\begin{align*}
\opa_b^\Phi\alpha(\ol L_1^\Phi,\dots,\ol L_{q+1}^\Phi)&=\sum_{j=1}^{q+1}(-1)^{j+1}\opa_b^\Phi(\alpha(\ol L_1^\Phi,\dots,\widehat{\ol L_j^\Phi},\dots,\ol L_{q+1}^\Phi))(\ol L_j^\Phi)\\&+\sum_{j<k}(-1)^{j+k}\alpha([\ol L_j^\Phi,\ol L_k^\Phi],\ol L_1^\Phi,\dots,\widehat{\ol L_j^\Phi},\dots,\widehat{\ol L_k^\Phi},\dots,\ol L_{q+1}^\Phi)
\end{align*}
and if $\beta\in \ci_{0,q}(\mb)$ and $\ol L_1,\dots,\ol L_{q+1}\in \Gamma(\mb,H_{0,1}\mb)$, we set
\begin{align*}
d''_\Phi\beta(\ol L_1,\dots,\ol L_{q+1})&=\sum_{j=1}^{q+1}(-1)^{j+1}d''_\Phi(\beta(\ol L_1,\dots,\widehat{\ol L_j},\dots,\ol L_{q+1}))(\ol L_j)\\&+\sum_{j<k}(-1)^{j+k}\beta([\ol L_j,\ol L_k],\ol L_1,\dots,\widehat{\ol L_j},\dots,\widehat{\ol L_k},\dots,\ol L_{q+1}).
\end{align*}

Note that $\opa_b^\Phi=d{|_{\widehat H_{0,1}\mb\times\dots\times \widehat H_{0,1}\mb}}$ and the pseudo-complex $(\ci_{0,*}(\widehat\mb),\opa_b^\Phi)$ is a differential complex, which is nothing else than the tangential Cauchy-Riemann complex on $\mb$ associated to the new CR structure $\widehat H^{0,1}\mb$.

\begin{lem}
The pseudo-complex $(\ci_{0,*}(\mb),d''_\Phi)$ is a differential complex, i.e. $d''_\Phi\circ d''_\Phi =0$ if and only if $\Phi$ satisfies  $[\ol L_1^\Phi,\ol L_2^\Phi]=[\ol L_1,\ol L_2]^\Phi$, for any $\ol L_1,\ol L_2\in \Gamma(\mb, H_{0,1}\mb)$.
\end{lem}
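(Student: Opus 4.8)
The plan is to compare the operator $d''_\Phi$ on $\ci_{0,*}(\mb)$ with the genuine tangential Cauchy-Riemann operator $\opa_b^\Phi$ of the perturbed structure through the bundle isomorphism induced by the graph. Since $\Phi$ is $\cc^\infty$-linear, the map $\sigma~:~H_{0,1}\mb\to\widehat H_{0,1}\mb$, $\ol L\mapsto\ol L^\Phi=\ol L-\Phi(\ol L)$, is a $\cc^\infty$-linear bundle isomorphism (its inverse being the projection $\ol L^\Phi\mapsto(\ol L^\Phi)^{0,1}=\ol L$). Its transpose induces isomorphisms $\Psi_q~:~\ci_{0,q}(\widehat\mb)\to\ci_{0,q}(\mb)$ defined by $(\Psi_q\alpha)(\ol L_1\ld\ol L_q)=\alpha(\ol L_1^\Phi\ld\ol L_q^\Phi)$, with $\Psi_0=\id$. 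On functions the defining identity $d''_\Phi f(\ol L)=\ol L^\Phi f=\opa_b^\Phi f(\ol L^\Phi)$ reads $d''_\Phi=\Psi_1\circ\opa_b^\Phi$.

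For the "if" part I would show that $\Psi$ is a morphism of the two pseudo-complexes precisely under the stated hypothesis. Writing out the two K\"oszul formulas defining $d''_\Phi\Psi_q\alpha$ and $\Psi_{q+1}\opa_b^\Phi\alpha$ and evaluating on $\ol L_1\ld\ol L_{q+1}\in\Gamma(\mb,H_{0,1}\mb)$, the derivative sums coincide term by term: the coefficients $\alpha(\ol L_1^\Phi\ld\widehat{\ol L_j^\Phi}\ld\ol L_{q+1}^\Phi)$ are the same functions, and $\opa_b^\Phi(\cdot)(\ol L_j^\Phi)=\ol L_j^\Phi(\cdot)=d''_\Phi(\cdot)(\ol L_j)$ on functions. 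The only discrepancy sits in the bracket sums, where $\Psi_{q+1}\opa_b^\Phi\alpha$ contributes $\alpha([\ol L_j^\Phi,\ol L_k^\Phi],\dots)$ while $d''_\Phi\Psi_q\alpha$ contributes $\alpha([\ol L_j,\ol L_k]^\Phi,\dots)$, the remaining arguments being the $\ol L_i^\Phi$, $i\neq j,k$, in both cases. Hence
\begin{equation*}
\bigl(d''_\Phi\Psi_q-\Psi_{q+1}\opa_b^\Phi\bigr)\alpha\,(\ol L_1\ld\ol L_{q+1})=\sum_{j<k}(-1)^{j+k}\,\alpha\bigl([\ol L_j,\ol L_k]^\Phi-[\ol L_j^\Phi,\ol L_k^\Phi],\dots\bigr),
\end{equation*}
and the hypothesis $[\ol L_j^\Phi,\ol L_k^\Phi]=[\ol L_j,\ol L_k]^\Phi$ makes this vanish. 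Moreover that same hypothesis gives $[\ol L_j^\Phi,\ol L_k^\Phi]=[\ol L_j,\ol L_k]^\Phi=\sigma([\ol L_j,\ol L_k])\in\widehat H_{0,1}\mb$, so $\widehat H_{0,1}\mb$ is integrable and $\opa_b^\Phi=d_{|_{\widehat H_{0,1}\mb\times\dots\times\widehat H_{0,1}\mb}}$ is a genuine differential with $\opa_b^\Phi\circ\opa_b^\Phi=0$. Transporting this through $d''_\Phi\circ\Psi=\Psi\circ\opa_b^\Phi$ yields $d''_\Phi\circ d''_\Phi=\Psi\circ(\opa_b^\Phi)^2\circ\Psi^{-1}=0$.

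For the converse it suffices to test $d''_\Phi\circ d''_\Phi=0$ in degree $0$. For a function $f$, writing $\omega=d''_\Phi f$ so that $\omega(\ol L)=\ol L^\Phi f$, the degree-$1$ formula gives
\begin{equation*}
d''_\Phi\omega(\ol L_1,\ol L_2)=\ol L_1^\Phi(\ol L_2^\Phi f)-\ol L_2^\Phi(\ol L_1^\Phi f)-[\ol L_1,\ol L_2]^\Phi f=\bigl([\ol L_1^\Phi,\ol L_2^\Phi]-[\ol L_1,\ol L_2]^\Phi\bigr)f.
\end{equation*}
If $d''_\Phi\circ d''_\Phi=0$, this vanishes for every $f$, so the first-order operator $[\ol L_1^\Phi,\ol L_2^\Phi]-[\ol L_1,\ol L_2]^\Phi$ annihilates all functions and is the zero vector field, which is exactly the asserted identity. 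The delicate point throughout is that $d''_\Phi$ and $\opa_b^\Phi$ live on the two distinct bundles $H^{0,1}\mb$ and $\widehat H^{0,1}\mb$, forcing one to pass through $\sigma$; the hypothesis is precisely what is needed both to match the bracket terms under $\Psi$ and to ensure that $\opa_b^\Phi$ is a true differential whose vanishing square can be pulled back.
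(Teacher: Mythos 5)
Your proof is correct, and its two halves relate to the paper's proof in different ways. The ``only if'' half is precisely the paper's entire argument: for $f\in\ci(\mb)$ one computes
\begin{equation*}
d''_\Phi\circ d''_\Phi f(\ol L_1,\ol L_2)=\opa_b^\Phi\circ\opa_b^\Phi f(\ol L_1^\Phi,\ol L_2^\Phi)+\bigl([\ol L_1^\Phi,\ol L_2^\Phi]-[\ol L_1,\ol L_2]^\Phi\bigr)f=\bigl([\ol L_1^\Phi,\ol L_2^\Phi]-[\ol L_1,\ol L_2]^\Phi\bigr)f,
\end{equation*}
the middle equality using $(\opa_b^\Phi)^2=0$ for the integrable perturbed structure, and vanishing for all $f$ forces the vector field $[\ol L_1^\Phi,\ol L_2^\Phi]-[\ol L_1,\ol L_2]^\Phi$ to be zero. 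Where you genuinely depart from the paper is the ``if'' half. The paper stops at the displayed degree-$0$ identity and declares the lemma proved, so, read literally, it only establishes $d''_\Phi\circ d''_\Phi=0$ on functions; the vanishing on $(0,q)$-forms for $q\geq 1$ is left implicit. You instead promote the correspondence $\ol L\mapsto\ol L^\Phi$ to a bundle isomorphism $\sigma~:~H_{0,1}\mb\to\widehat H_{0,1}\mb$, check that the induced isomorphisms $\Psi_q$ intertwine the two defining formulas up to exactly the bracket discrepancy $\alpha\bigl([\ol L_j,\ol L_k]^\Phi-[\ol L_j^\Phi,\ol L_k^\Phi],\dots\bigr)$, and then conjugate $(\opa_b^\Phi)^2=0$ through $d''_\Phi=\Psi_{q+1}\circ\opa_b^\Phi\circ\Psi_q^{-1}$ to obtain the vanishing in every degree. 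This is the more complete treatment of the statement as written; it costs the bookkeeping of the cochain isomorphism but closes the gap in degrees $q\geq 1$. Both arguments rest on the same key input, namely that $\opa_b^\Phi$ is the genuine tangential Cauchy--Riemann operator of the integrable structure $\widehat H_{0,1}\mb$ and hence squares to zero; your derivation of that integrability from the bracket hypothesis is redundant in the paper's setting (where the perturbation is assumed integrable from the outset) but does no harm.
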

\begin{proof}
Let $f\in\ci(\mb)$ be a $\ci$-smooth function on $\mb$, going back to the definition of the operator $d''_\Phi$ we get
\begin{align*}
d''_\Phi\circ d''_\Phi f(\ol L_1,\ol L_2)&=\ol L_1^\Phi\ol L_2^\Phi f-\ol L_2^\Phi\ol L_1^\Phi f-[\ol L_1,\ol L_2]^\Phi f\\
&=\opa_b^\Phi\circ\opa_b^\Phi f(\ol L_1^\Phi,\ol L_2^\Phi)+([\ol L_1^\Phi,\ol L_2^\Phi]-[\ol L_1,\ol L_2]^\Phi)f\\
&=([\ol L_1^\Phi,\ol L_2^\Phi]-[\ol L_1,\ol L_2]^\Phi)f,
\end{align*}
which proves the lemma.
\end{proof}

\section{Embedding of small horizontal perturbations in complex manifolds}\label{s3}

Let $(\mb,H_{0,1}\mb)$ be an abstract compact CR manifold of class
$\ci$ and $\ec_0~:~\mb\to M_0\subset X$ be a $\ci$-smooth CR
embedding in a complex manifold $X$, then $M_0$ is a compact CR
submanifold of $X$ of class $\ci$ with the CR structure
$H_{0,1}M_0=d\ec_0(H_{0,1}\mb)=T_\cb M_0\cap T_{0,1} X$ and the
tangential Cauchy-Riemann operator $\opa_b$.

Let $\widehat H_{0,1}\mb$ be an horizontal perturbation of
$H_{0,1}\mb$, we are looking for an embedding $\ec~:~\mb\to
M\subset X$ of class $\cc^l$, $l\geq 1$, such that $d\ec(\widehat
H_{0,1}\mb)=T_\cb M\cap
T_{0,1} X$, i.e. $\ec$ is a CR embedding.

Set $\widehat H_{0,1}M_0=d\ec_0(\widehat H_{0,1}\mb)$, as $\ec_0$ is
a CR embedding then $\widehat H_{0,1}M_0$ is an horizontal
perturbation of $H_{0,1}M_0$ and consequently it is defined by a
$(0,1)$-form $\Phi\in\cc^k_{0,1}(M_0,H_{1,0}M_0)$, $k\geq 1$. We
denote by $\opa_b^\Phi$ the associated tangential Cauchy-Riemann
operator.

We will consider only \emph{small} (the sense will be precised
later) perturbations of the original structure, thus it is
reasonable to assume that the diffeomorphism
$F=\ec\circ\ec_0^{-1}~:~M_o\subset X\to M\subset X$ is close to
identity.

We equip the manifold $X$ with some Riemannian metric (for example,
if $X=\cb P^n$, take the Fubini-Study metric). The idea
is to look for some $F$ in the subset of the restrictions to $M_0$
of $\cc^l$-diffeomorphisms of $X$ parametrized by sections of the
vector bundle $TX$ by mean of the exponential map. Let us consider
the following diagram where $U$ is a neighborhood of the zero
section and $\sigma$ a section of $TX$ over $M_0$
$$
\begin{array}{ccc}
U\subset TX & \overset\exp\longrightarrow & X\times X\\
(x,\sigma(x))& \longmapsto & (x,F(x))=\exp_x\sigma(x)\\
\big\uparrow& &\big\downarrow\\
M_0 & \longrightarrow & X\\
x & \longmapsto & F(x)
\end{array}
$$
In fact $\ec$ will be a CR embedding if and only if $F$ is CR as a map from $\widehat M_0=(M_0,\widehat H_{0,1}M_0)$ into $X$,
which means that we have to find a section $\sigma$ of $TX$
over $M_0$ such that the image of the new CR structure $\widehat
H_{0,1}M_0$ by the tangent map to $\exp\circ~\sigma$ is contained in $T_{0,1}
(X\times X)$.

More precisely, using that for all $x\in M_0$ we can write $F(x)=\pi(x,F(x))=\pi(\exp_x \sigma(x))$, with $\pi$ the second projection from $X\times X$ onto $X$, the map $F$ will be CR if and only if for all vector fields $\ol L^\Phi=\ol L-\Phi(\ol L)\in\Gamma(\mb,\widehat H^{0,1}\mb)$,  $\ol L\in \Gamma(\mb, H^{0,1}\mb)$,
we have
$$dF(\ol L^\Phi)=d(\pi(\exp_x \sigma(x))(\ol L^\Phi)=0.$$
As the differential of the map $\exp$ at a point is given by
the map $(u,\xi)\mapsto (u,u+\xi)$, this is equivalent to $(d(Id)+d(\sigma))(\ol L^\Phi)=0$, i.e.
$$\opa_b^\Phi\sigma=-\opa_b^\Phi Id.$$

Since $d(Id)(\ol L^\Phi)=-\Phi(\ol L)$, all that means that we have to solve the equation
$$d''_\phi\sigma=\Phi.$$

The remaining of the section will be devoted to the proof of the following theorem:
\begin{thm}\label{perturb-var}
Let $(\mb,H_{0,1}\mb)$ be an abstract compact CR
manifold of class $\cc^\infty$, which is  smoothly embeddable as a
CR manifold in a complex manifold $X$. Assume that $(\mb,H_{0,1}\mb)$
is $2$-concave. Let $\ec_0$ be a smooth
embedding  from $\mb$ into $X$ and denote by $M_0$ the image of $\mb$
by the embedding $\ec_0$. Assume the $\opa_b$-group of
cohomology $H^{0,1}(M_0,T_{1,0}X_{|_{M_0}})=0$ and let $\widehat
H_{0,1}\mb$ be an horizontal perturbation of $H_{0,1}\mb$ defined by
a $(0,1)$-form $\Phi\in\cc_{0,1}^{l+3}(\mb,H_{1,0}\mb)$, $l\geq 1$.

Then there exists a positive real number $\delta$ such that if
$\|\Phi\|_{l+3}<\delta$, then the CR manifold $(\mb,\widehat
H_{0,1}\mb)$ is embeddable in $X$ as a CR submanifold of class
$\cc^l$.
\end{thm}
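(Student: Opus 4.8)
The plan is to reduce the embeddability problem to solving the single tangential Cauchy-Riemann equation $d''_\Phi\sigma=\Phi$ for a section $\sigma$ of $T_{1,0}X_{|_{M_0}}$ over $M_0$, exactly as set up in the paragraphs preceding the statement. Once such a $\sigma$ is found with controlled $\cc^l$ norm, the diffeomorphism $F=\pi\circ\exp\circ\,\sigma$ is CR from $(\mb,\widehat H_{0,1}\mb)$ to $X$, and $\ec=F\circ\ec_0$ gives the desired $\cc^l$ CR embedding. So the whole theorem hinges on a quantitative solvability statement for $d''_\Phi$ in bidegree $(0,1)$ with values in $T_{1,0}X$, uniform in the small parameter $\Phi$.

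First I would treat the \emph{unperturbed} operator $\opa_b$ acting on $(0,1)$-forms with values in $T_{1,0}X_{|_{M_0}}$. Since $\mb$ is $2$-concave, the Levi form has at least two negative eigenvalues at every characteristic covector, so condition $Y(1)$ holds (by the remark after Definition~\ref{y(q)}, $q$-concavity gives $Y(r)$ for $0\le r\le q-1$, in particular $Y(1)$ since $q=2$). This, together with the vanishing hypothesis $H^{0,1}(M_0,T_{1,0}X_{|_{M_0}})=0$, is what lets me invoke a global homotopy formula: there exist bounded operators $P,Q$ on the appropriate anisotropic Hölder spaces such that $u=\opa_b Pu+Q\opa_b u$ for $(0,1)$-forms $u$, with $Q$ gaining regularity and $Qu$ depending continuously on $u$. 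The cohomology vanishing forces the harmonic/obstruction term to disappear, so that any $\opa_b$-closed $u$ is actually $\opa_b$-exact with an explicit solution $Qu$ satisfying a tame estimate $\|Qu\|_{l+1}\lesssim\|u\|_{l+2}$ in the anisotropic scale. The horizontality of $\Phi$ is precisely the feature (noted after the main statement) that keeps everything in these anisotropic Hölder classes and avoids a Nash-Moser scheme.

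Next I would pass from $\opa_b$ to the perturbed operator $d''_\Phi=\opa_b-\Phi\lrcorner\,\pa_b$. The key algebraic input is the integrability equation \eqref{integrability2}, $\opa_b^{H_{1,0}\mb}\Phi=\tfrac12[\Phi,\Phi]$, which guarantees $d''_\Phi\circ d''_\Phi=0$ (via the Lemma) and, more importantly, tells me that $\Phi$ is \emph{almost} $\opa_b$-closed: $\opa_b\Phi$ is quadratic in $\Phi$. I would therefore write the equation $d''_\Phi\sigma=\Phi$ as $\opa_b\sigma=\Phi+\Phi\lrcorner\,\pa_b\sigma$ and solve it by a fixed-point argument. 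Set $\Psi(\sigma)=Q\bigl(\Phi+\Phi\lrcorner\,\pa_b\sigma\bigr)$; a fixed point of $\Psi$ solves the equation provided the right-hand side is $\opa_b$-closed, which one checks using the integrability of $\widehat H_{0,1}\mb$. The map $\Psi$ is a contraction on a small ball $\{\|\sigma\|_{l+1}\le C\delta\}$ in the anisotropic Hölder space: the linear term $\Phi\lrcorner\,\pa_b\sigma$ has norm controlled by $\|\Phi\|\cdot\|\sigma\|$ after the one-derivative loss absorbed by the gain in $Q$, so for $\|\Phi\|_{l+3}<\delta$ small the Lipschitz constant is $<1$. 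The choice of $\cc^{l+3}$ regularity for $\Phi$ and $\cc^l$ output for the embedding is dictated exactly by bookkeeping the two-or-three derivative losses in the homotopy formula and the $\pa_b$ in the nonlinearity.

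The main obstacle is the \textbf{quantitative and uniform} homotopy formula with gain on $(0,1)$-forms valued in $T_{1,0}X_{|_{M_0}}$, together with verifying that the fixed-point iteration stays within spaces where $\opa_b$-closedness is preserved at every step. Concretely, I expect the delicate point to be showing that $\Phi+\Phi\lrcorner\,\pa_b\sigma$ remains $\opa_b$-closed along the iteration (so that $Q$ actually inverts $\opa_b$ on it), which must be read off from \eqref{integrability2} and the relation $d''_\Phi d''_\Phi=0$ rather than imposed by hand; and secondly, controlling the anisotropic Hölder estimates for the integral kernels uniformly as the CR structure is perturbed. In the special case $X=\cb P^n$ the cohomology hypothesis can be dropped because the relevant $\opa_b$-cohomology of $T_{1,0}\cb P^n_{|_{M_0}}$ can be computed and handled directly, which is the content deferred to section~\ref{s4}.
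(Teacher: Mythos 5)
Your reduction and functional framework coincide with the paper's own proof (Sections \ref{s2.1}--\ref{s2.3}): solve $d''_\Phi\sigma=\Phi$ for a section of $T_{1,0}X_{|_{M_0}}$, invoke the global homotopy formula $f=\opa_b A_1f+A_2\opa_b f$ of Theorem \ref{formuleglobale} (available because $\mb$ is compact, $2$-concave, locally generically embeddable by \cite{NaPo}, and $H^{0,1}(M_0,T_{1,0}X_{|_{M_0}})=0$), and iterate $v\mapsto A_1\wt g+A_1(\Phi\lrcorner\pa_b v)$ in anisotropic H\"older spaces, where the gain of one anisotropic derivative of $A_1$ and the horizontality of $\Phi$ make this a contraction for $\|\Phi\|$ small; then compose with the exponential map. (Two slips in your write-up: in your formula $u=\opa_b Pu+Q\opa_b u$ the solution for $\opa_b$-closed $u$ is $Pu$, not $Qu$, and your stated estimate $\|Qu\|_{l+1}\lesssim\|u\|_{l+2}$ expresses a loss, whereas the operators $A_r$ gain one anisotropic derivative.)

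There is, however, a genuine gap at precisely the step you flag as the main obstacle, and the resolution you sketch would fail. A fixed point $v$ only satisfies $v=A_1(\wt g+\Phi\lrcorner\pa_b v)$; to conclude from \eqref{homotopieglobale} that $\opa_b v=\wt g+\Phi\lrcorner\pa_b v$ one must deal with the term $A_2\opa_b(\wt g+\Phi\lrcorner\pa_b v_n)$. You propose to show that $\wt g+\Phi\lrcorner\pa_b v_n$ \emph{remains $\opa_b$-closed along the iteration}, read off from \eqref{integrability2} and $d''_\Phi\circ d''_\Phi=0$. This cannot work: writing $\wt u_n=\wt g-d''_\Phi v_n$ for the error at step $n$, one has $\wt g+\Phi\lrcorner\pa_b v_n=\opa_b v_n+\wt u_n$, so $\opa_b(\wt g+\Phi\lrcorner\pa_b v_n)=\opa_b\wt u_n$, and the paper's computation (using $(\opa_b^\Phi)^2=0$ and $\opa_b^\Phi g=0$) gives $\opa_b\wt u_n=\tau_\Phi(\wt u_n)$, a quantity that has no reason to vanish unless $\wt u_n=0$, i.e. unless $v_n$ already solves the equation --- so demanding closedness of the iterates is circular. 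The missing idea is that closedness is never established; instead the identity $\opa_b\wt u_n=\tau_\Phi(\wt u_n)$, where $\tau_\Phi$ is a zeroth-order operator of norm $O(\|\Phi\|_{l+3})$, is used to close the quantitative recursion \eqref{erreur}, $\wt u_{n+1}=\Phi\lrcorner\pa_b(v_{n+1}-v_n)+A_2(\tau_\Phi(\wt u_n))$, from which $\|\wt u_n\|_{\bc^{2l}}$ decays geometrically as in \eqref{est3} and the limit $v$ solves \eqref{dbarphi}. This error-propagation argument is also exactly where the hypothesis $\Phi\in\cc^{l+3}_{0,1}(\mb,H_{1,0}\mb)$ --- one derivative more than the contraction itself requires --- is used, since $\tau_\Phi$ involves $\ol W_\Phi$, which depends on first-order derivatives of $\Phi$.
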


\subsection{Reduction to a fixed point theorem}\label{s2.1}

Let $E$ be a CR bundle over $\mb$ which satisfies $H^{0,1}(\mb,E)=0$
and $g$ be a $\opa_b^\Phi$-closed $(0,1)$-form in
$\cc^{l+2}_{0,1}(\widehat \mb,E)$ and let us consider the following equation:
\begin{equation}\label{dbarphi}
\opa_b^\Phi v=g.
\end{equation}

By definition of the $\opa_b^\Phi$-operator, the equation
$\opa_b^\Phi v=g$ is equivalent to the equation
\begin{equation}\label{dbarbis}
\opa_b v=\wt g+\Phi\lrcorner\pa_b v,
\end{equation}
where $\wt g$ is the $(0,1)$-form on $\mb$ relatively to the initial structure $H_{0,1}\mb$ defined by $\wt g(\ol L)=g(\ol L-\Phi(\ol L))$ for $\ol L\in \Gamma(\mb,H_{0,1}\mb)$.

A natural tool to solve such an equation is a global homotopy
formula for the $\opa_b$-operator with good estimates.

Assume $\mb$ is $2$-concave, then by \cite{NaPo}, since $\mb$ is embeddable and $1$-concave, $\mb$ is locally generically embeddable and we may
apply the results in \cite{BaLa} and \cite{LaLeglobal} on local
estimates and global homotopy formulas for the tangential Cauchy-Riemann
operator.

In \cite{BaLa} the following result is proved

\begin{prop}\label{estmeeslocales}
Let $M$ be a $2$-concave CR generic
submanifold of $X$ of class $\ci$.
For each point in $M$, there exist
a neighborhood $U$ and linear operators
\begin{equation*}
T_r~:~\cc^0_{n,r}(M)\to \cc^0_{n,r-1}(U),\quad 1\leq r\leq 2
\end{equation*}
with the following two properties~:

(i) For all $l\in\nb$ and $1\leq r\leq 2$,
\begin{equation*}
T_r(\cc^l_{n,r}(M))\subset \cc^{l+1/2}_{n,r-1}(\ol U)
\end{equation*}
and $T_r$ is continuous as an operator between $\cc^l_{n,r}(M)$ and
$\cc^{l+1/2}_{n,r-1}(\ol U)$.

(ii) If $f\in\cc^1_{n,r}(M)$, $0\leq r\leq 1$, has compact support in
$U$, then, on $U$,
\begin{equation}\label{homotopielocale}
f=\begin{cases}T_{1}\opa_b f&\text{if
}r=0\,,\\\opa_b T_1f+T_{2}\opa_b f\qquad&\text{if }r=1 \,.
\end{cases}
\end{equation}
\end{prop}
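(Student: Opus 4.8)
The plan is to realize $T_1$ and $T_2$ as explicit integral operators of Cauchy--Fantappi\`e (Koppelman) type, adapted to the CR geometry of $M$, and to read off both the homotopy formula \eqref{homotopielocale} and the $1/2$-gain from the structure of their kernels. First I would localize. Since $M$ is generic in $X$, near the given point I may choose holomorphic coordinates $\zeta=(\zeta_1\ld\zeta_N)$ on a coordinate ball $\Omega$ of $X$, with $N=\dim_\cb X$ and $n=N-k$ the CR dimension, in which $M\cap\Omega=\{\rho_1=\dots=\rho_k=0\}$ for real defining functions satisfying $\pa\rho_1\wedge\dots\wedge\pa\rho_k\neq 0$ (genericity). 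I would then start from the ambient Bochner--Martinelli--Koppelman double form in $\cb^N$ and correct it by a holomorphic Leray section, so that after restriction to $M$ the resulting kernels act on tangential $(n,r)$-forms and give the desired operators over a small neighborhood $U$.

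The heart of the construction is a supporting function, holomorphic in $z$, that exploits $2$-concavity. For $\zeta\in M\cap\Omega$ consider the Levi polynomials
$$\Phi_\mu(\zeta,z)=\sum_{j=1}^N\frac{\pa\rho_\mu}{\pa\zeta_j}(\zeta)\,(\zeta_j-z_j)-\frac12\sum_{i,j=1}^N\frac{\pa^2\rho_\mu}{\pa\zeta_i\pa\zeta_j}(\zeta)\,(\zeta_i-z_i)(\zeta_j-z_j),\quad 1\le\mu\le k,$$
each holomorphic in $z$ and vanishing on the diagonal. A Taylor expansion along $M$ gives $\rea\Phi_\mu(\zeta,z)=-\rho_\mu(z)+L_\mu(\zeta;z-\zeta)+O(|z-\zeta|^3)$, where $L_\mu$ is the Levi form attached to $\rho_\mu$. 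Because $M$ is $2$-concave, for every nonzero characteristic codirection $\omega=\sum_\mu c_\mu\,d\rho_\mu$ the combination $\sum_\mu c_\mu L_\mu$ has at least two negative eigenvalues on $H_\zeta M$. This lets me choose, over a circle (or $\cb P^1$) of normalized parameters $c=(c_1\ld c_k)$, a family $\Phi_c=\sum_\mu c_\mu\Phi_\mu$ whose real parts are bounded below by $\epsilon\,|z-\zeta|^2$, $\epsilon>0$, on the corresponding cone of negative directions. Writing the Hefer-type factorization $\Phi_c(\zeta,z)=\langle w_c(\zeta,z),\zeta-z\rangle$ with $w_c$ holomorphic in $z$, I obtain a family of Leray sections whose denominators $\Phi_c$ do not vanish on the part of the integration chain where they are needed.

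With the Bochner--Martinelli section $s(\zeta,z)=\ol\zeta-\ol z$ always available and the holomorphic family $w_c$ supplied by $2$-concavity, I form the Koppelman interpolation double form $\Omega=\sum_r\Omega_r$ and integrate over $M\times S$, where $S$ is the parameter chain of the $c$'s. Applying the Koppelman--Stokes formula to a compactly supported $f\in\cc^1_{n,r}(M)$ yields $f=\opa_b T_r f+T_{r+1}\opa_b f$ up to boundary contributions, which vanish because $f$ has compact support in $U$ and the parameter integration closes up. Exactly as for $q$-concave manifolds in general, $2$-concavity guarantees that the Leray part is defined in the degrees $0\le r\le 1$ needed to realize this, producing the operators $T_1,T_2$; the case $r=0$ degenerates to $f=T_1\opa_b f$ since there is no $T_0$ term.

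Finally I would establish the continuity $\cc^l_{n,r}(M)\to\cc^{l+1/2}_{n,r-1}(\ol U)$. The kernel of $T_r$ splits into an isotropic Bochner--Martinelli part, with a Newtonian singularity of order $|\zeta-z|^{1-2N}$ integrated against the $(2N-k)$-dimensional $M$, and an anisotropic Henkin part whose denominator is a power of $\Phi_c$ and behaves like $|z-\zeta|^2+|\ima\langle\pa\rho,\zeta-z\rangle|$. For the H\"older estimates I would differentiate under the integral sign $l$ times, distribute the derivatives onto the kernel, and reduce to singular integrals of these two model types over $M$; the standard Hardy--Littlewood type lemmas then give the gain of $1/2$ derivative in the isotropic H\"older scale. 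The main obstacle is the second paragraph: because the Levi signature is only concave and not strictly pseudoconvex, constructing a holomorphic-in-$z$ support function whose real part is \emph{uniformly} controlled by the two negative eigenvalues, with a nowhere-vanishing Leray denominator over the entire parameter chain, is delicate, and this is precisely where $2$-concavity is indispensable. Treating the $k$ defining functions simultaneously and verifying that the boundary terms in the Koppelman formula genuinely cancel are the remaining points requiring care.
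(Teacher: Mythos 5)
The paper contains no proof of Proposition \ref{estmeeslocales}: it is quoted verbatim from \cite{BaLa}, so the only argument the paper offers is a citation. Your sketch does follow the same general strategy as that reference (Cauchy--Fantappi\`e/Koppelman kernels built from Levi polynomials, with concavity supplying the barrier), so the roadmap is the right one; but as a proof it has genuine gaps at exactly the points where the content of \cite{BaLa} lies, and you yourself flag them without resolving them.

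First, the support function. From $2$-concavity you only get, for each characteristic codirection, \emph{two} negative Levi eigenvalues; the remaining $n-k-2$ eigenvalues can have any sign. Hence $\rea\Phi_c(\zeta,z)\geq\epsilon|z-\zeta|^2$ can hold only for $z-\zeta$ in a cone around the negative eigenspace, never on a full neighborhood, whereas a Leray section needs its denominator nonvanishing on the whole chain over which you integrate. Your phrase ``on the corresponding cone of negative directions'' concedes precisely this, and at that point the operators $T_r$ are not yet defined. The remedy --- integration over the sphere $S^{k-1}$ of normalized codirections (not a circle, unless $k=2$), decomposition of the integration domain into pieces on which each denominator is controlled, and the verification that the Koppelman--Stokes terms coming from the boundaries of these pieces and of the parameter chain \emph{recombine} into $\opa_b T_r f+T_{r+1}\opa_b f$ rather than vanish --- is the bulk of the Airapetjan--Henkin-type construction; compact support of $f$ only kills the terms on $\pa U$, not the chain-boundary terms. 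Second, property (i) is not a consequence of ``standard Hardy--Littlewood lemmas'': the gain of $1/2$ in the isotropic H\"older scale, uniformly for all $l\in\nb$ and up to $\ol U$, is the optimal estimate that is the main point of \cite{BaLa} (hence its title); it requires differentiating the anisotropic kernels $l$ times, integrating by parts on the kernel to redistribute tangential derivatives, and estimating the resulting singular integrals, none of which is routine. In short: correct architecture, matching the cited source, but the two steps you defer as ``requiring care'' are the theorem.
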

{\parindent=0pt and in \cite{LaLeglobal} we have derived from the
  previous proposition
a global homotopy formula by mean of a functional analytic
construction.}

\begin{thm}\label{formuleglobale}
Let $E$ be an holomorphic vector bundle over $X$ and  $\mb$ be a
compact $2$-concave locally generically embeddable CR submanifold of $X$ of class $\ci$ such that
$H^{0,1}(\mb,E)=0$. Then there exist continuous linear operators
\begin{equation*}
A_r~:~\cc^0_{0,r}(\mb,E)\to \cc^0_{0,r-1}(\mb,E),\quad 1\leq r\leq 2
\end{equation*}
such that

(i) For all $l\in\nb$ and $1\leq r\leq 2$,
\begin{equation*}
A_r(\cc^l_{0,r}(\mb,E))\subset \cc^{l+1/2}_{0,r-1}(\mb,E)
\end{equation*}
and $A_r$ is continuous as an operator between $\cc^l_{0,r}(\mb,E)$
and $\cc^{l+1/2}_{0,r-1}(\mb,E)$.

(ii) For all $f\in\cc^1_{0,1}(\mb,E)$
\begin{equation}\label{homotopieglobale}
f=\opa_b A_1f+A_{2}\opa_b f
\end{equation}
\end{thm}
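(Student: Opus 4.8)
The plan is to build the global operators $A_1,A_2$ from the local operators $T_r$ of Proposition~\ref{estmeeslocales} by a localisation procedure, and then to remove the resulting error term by a functional-analytic correction that uses the hypothesis $H^{0,1}(\mb,E)=0$.

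First I would reduce the bundle-valued statement to the scalar local result. Since $\mb$ is embeddable and $1$-concave, it is locally generically embeddable by \cite{NaPo}, so Proposition~\ref{estmeeslocales} applies; tensoring locally with holomorphic frames of $E$ (and of the canonical bundle, to pass from $(0,r)$-forms with values in $E$ to the $(n,r)$-setting of the proposition) one obtains, over a finite cover $U_1,\dots,U_m$ of the compact manifold $\mb$, local homotopy operators $T_r^{(i)}$ acting on $\cc^l_{0,r}(\mb,E)$, each gaining $1/2$ in the anisotropic H\"older scale and satisfying the local formula \eqref{homotopielocale} on forms with compact support in $U_i$.

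Next comes the parametrix step. Fixing a partition of unity $\{\chi_i\}$ with $\supp\chi_i\subset U_i$ and setting $\wt A_r g=\sum_i T_r^{(i)}(\chi_i g)$, I apply the local formula to each $\chi_i f$ and sum over $i$. Although $\sum_i\opa_b\chi_i=0$, the terms in which $\opa_b$ hits $\chi_i$ do not cancel because the $T_r^{(i)}$ differ, and one is left with
\[
f=\opa_b\wt A_1 f+\wt A_2\opa_b f+Rf,\qquad Rf=\sum_i T_2^{(i)}(\opa_b\chi_i\wedge f),
\]
for every $f\in\cc^1_{0,1}(\mb,E)$. Since $\opa_b\chi_i$ is smooth, $R$ gains $1/2$ derivative, hence by the Arzel\`a--Ascoli theorem on the compact manifold $\mb$ it is a compact operator on $\cc^0_{0,1}(\mb,E)$; the same construction in degree $0$ produces a companion compact remainder.

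The final, and hardest, step is to upgrade this approximate homotopy into an exact one. For $\opa_b$-closed $f$ one has $\opa_b Rf=0$, so $R$ preserves the closed forms and induces the identity on $H^{0,1}(\mb,E)$; as $R$ is compact this forces $H^{0,1}(\mb,E)$ to be finite-dimensional and, by the usual Fredholm argument, $\opa_b$ to have closed range in degree $1$. Invoking now the hypothesis $H^{0,1}(\mb,E)=0$, every closed $(0,1)$-form is exact and the open mapping theorem provides a bounded solution operator $S$ with $\opa_b S=\id$ on closed forms, still gaining $1/2$ derivative thanks to the parametrix estimate. It then remains to write the compact remainder as an exact coboundary $R=\opa_b B_1+B_2\opa_b$ with bounded operators $B_1,B_2$ preserving the $\cc^l\to\cc^{l+1/2}$ gain: on closed forms one may take $B_1 f=S(Rf)$, while the non-closed part is controlled through the relation $\opa_b Rf=(\id-\opa_b\wt A_2)\opa_b f$ together with the degree-$2$ data. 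Setting $A_r=\wt A_r+B_r$ then yields the homotopy identity (ii), and (i) follows by tracking the half-derivative gain of the $T_r^{(i)}$ and of $S$ throughout. I expect this correction to be the main obstacle: the difficulty is to perform the Fredholm removal of $R$ \emph{by operators} rather than merely existentially, while keeping the correction smoothing in the anisotropic H\"older norms, which is exactly where the closed range property and the a priori estimates of Proposition~\ref{estmeeslocales} are essential.
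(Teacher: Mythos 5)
Your overall architecture --- patch the local operators of Proposition~\ref{estmeeslocales} by a partition of unity into a global parametrix $f=\opa_b\wt A_1f+\wt A_2\opa_b f+Rf$ with $R$ smoothing, hence compact, then invoke $H^{0,1}(\mb,E)=0$ and Fredholm theory to remove $R$ --- is exactly the route the paper indicates: the paper does not prove Theorem~\ref{formuleglobale} at all, but quotes it from \cite{LaLeglobal}, where it is derived from Proposition~\ref{estmeeslocales} ``by mean of a functional analytic construction.'' Your parametrix identity is correct (modulo the standard fix that $T_r^{(i)}$ produces forms only on $U_i$, so one needs a second cutoff $\wt\chi_i\equiv 1$ on $\supp\chi_i$, creating additional but still smoothing error terms), and so are the compactness of $R$, the fact that $R$ preserves closed forms and induces the identity on $H^{0,1}(\mb,E)$, and the resulting finiteness and closed-range conclusions.

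The genuine gap is in the removal step, precisely the part you yourself flag as ``the main obstacle.'' Your solution operator $S$ exists only on $\opa_b$-closed forms, and $Rf$ is closed only when $\opa_b f$ happens to satisfy $\opa_b\wt A_2\opa_b f=\opa_b f$; hence $B_1f=S(Rf)$ is defined only on closed $f$, whereas the identity $Rf=\opa_b B_1f+B_2\opa_b f$ must hold for every $f\in\cc^1_{0,1}(\mb,E)$, and $A_1,A_2$ must even be bounded on $\cc^0$. To handle general $f$ along the lines you sketch, you would need a bounded linear operator producing, from the datum $\opa_b f$ alone, a form $u$ with $\opa_b u=\opa_b Rf=(\id-\opa_b\wt A_2)\opa_b f$, i.e.\ you would need to solve a $\opa_b$-equation in degree $2$ with estimates; but Proposition~\ref{estmeeslocales} provides no degree-$2$ homotopy operator (the formula \eqref{homotopielocale} stops at $r=1$, consistent with $\mb$ being only $2$-concave), nothing is assumed on $H^{0,2}(\mb,E)$, and the kernel of $\opa_b$ on $\cc^1_{0,1}$ need not be complemented, so no abstract projection or open-mapping argument supplies this operator either. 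Two secondary points in the same step: the open mapping theorem gives a bounded but not automatically \emph{linear} solution operator (linearity needs the finite-dimensional kernel $H^0(\mb,E)$ to be complemented, which is true but must be said), and the claimed $\cc^l\to\cc^{l+1/2}$ gain for $S$ does not follow from the open mapping theorem in a single norm; it requires a bootstrap through the degree-$0$ parametrix. Carrying out this removal \emph{by operators, with the $1/2$-gain, and without any degree-$2$ input} is the actual content of the construction in \cite{LaLeglobal}; as written, your argument does not close.
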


In fact we need better estimates  than the previous ones to reduce
the solvability of our equation \ref{dbarbis} to a fixed point
theorem.

Assume there exist Banach spaces $\bc^l(\mb)$, $l\in\nb$, with the
following properties~:

(i) $\cc^{l+2}(\mb)\subset \bc^{2l+1}(\mb)\subset
\bc^{2l}(\mb)\subset\cc^l(\mb)$ ;

(ii) $\cap_{l\in\nb} \bc^l(\mb)=\ci(\mb)$ ;

(iii) $\bc^l(\mb)$ is invariant under horizontal perturbations of
the CR structure

(iv) If $f\in\bc^l(\mb)$, $l\geq 1$, $X_\cb f\in\bc^{l-1}(\mb)$ when
$X_\cb$ is a complex vector field tangent to $\mb$

and that the previous operators $A_r$, $r=1,2$ are linear continuous
operators from $\bc^l_{0,r}(\mb,E)$ into $\bc^{l+1}_{0,r-1}(\mb,E)$

Since $\mb$ is $2$-concave and $E$ satisfies $H^{0,1}(\mb,E)=0$,
under this additional hypothesis, if $v$ is a solution of
(\ref{dbarphi}), then $\opa_b (\wt g+\Phi\lrcorner\pa_b v)=0$ and by
\eqref{homotopieglobale}
$$\opa_b(A_{1}(\wt g+\Phi\lrcorner\pa_b v))=\wt g+\Phi\lrcorner\pa_b v.$$

Assume $\Phi$ is of class $\cc^{l+2}$, then the map
\begin{align*}
\Theta~:~\bc^{2l+1}(\mb,E)&\to\bc^{2l+1}(\mb,E)
\\
v&\mapsto A_{1} \wt g+A_{1}(\Phi\lrcorner\pa_b v)\,.
\end{align*}
is continuous, and the fixed points of $\Theta$ are good candidates
to be solutions of (\ref{dbarphi}).

\subsection{A fixed point theorem}\label{s2.2}
In this section we assume that all the assumptions of the previous
section are satisfied.

Let $\delta_0$ such that, if $\|\Phi\|_{l+2}<\delta_0$, then the
norm of the bounded endomorphism $A_{1}\circ\Phi\lrcorner\pa_b$ of
$\bc^{2l+1}(\mb,E)$ is equal to $\epsilon_0<1$. We shall prove that,
if $\|\Phi\|_{l+2}<\delta_0$, the map $\Theta$ admits a unique fixed
point, which is a solution of the equation $\opa_b^\Phi v=g$.

Consider first the uniqueness of the fixed point. Assume $v_1$ and
$v_2$ are two fixed points of $\Theta$, then
\begin{align*}
v_1&=\Theta(v_1)=A_{1}\wt g+A_{1}(\Phi\lrcorner\pa_b v_1)\\
v_2&=\Theta(v_2)=A_{1}\wt g+A_{1}(\Phi\lrcorner\pa_b v_2).
\end{align*}
This implies
$$v_1-v_2=A_{1}\big(\Phi\lrcorner\pa_b (v_1-v_2)\big)$$
and, by the hypothesis on $\Phi$,
$$\|v_1-v_2\|_{\bc^{2l+1}}<\|v_1-v_2\|_{\bc^{2l+1}}$$
or $v_1=v_2$ and hence $v_1=v_2$.

For the existence we proceed by iteration. We set
$v_0=\Theta(0)=A_{1}(\wt g)$ and, for $n\geq 0$, $v_{n+1}=\Theta(v_n)$.
Then for $n\geq 0$, we get
$$v_{n+1}-v_n=A_{1}(\Phi\lrcorner\pa_b (v_n-v_{n-1})).$$

Therefore, if $\|\Phi\|_{l+2}<\delta_0$, the sequence
$(v_n)_{n\in\nb}$ is a Cauchy sequence in the Banach space
$\bc^{2l+1}(\mb,E)$ and hence converges to a form $v$, moreover by
continuity of the map $\Theta$, $v$ satisfies $\Theta(v)=v$.

It remains to prove that $v$ is a solution of (\ref{dbarphi}). Since
$H^{0,1}(\mb,E)=0$, it follows from \eqref{homotopieglobale} and
from the definition of the sequence $(v_n)_{n\in\nb}$ that
\begin{equation*}
\wt g-d''_\Phi
v_{n+1}=\Phi\lrcorner\pa_b(v_{n+1}-v_n)+A_{2}\opa_b(\wt g+\Phi\lrcorner\pa_b
v_n).
\end{equation*}
For any $\ol L_1, \ol L_2\in \Gamma(\mb,H_{0,1}\mb)$,
\begin{align*}
\opa_b(\wt g+\Phi\lrcorner\pa_b v_n)(\ol L_1, \ol L_2)&=(\opa_b \wt g-\opa_b(\opa_b-\Phi\lrcorner\pa_b) v_n)(\ol L_1, \ol L_2)\\
&=\opa_b (\wt g-d''_\Phi v_n)(\ol L_1, \ol L_2)
\end{align*}
Let us consider the $(0,1)$-form $u$ for the new structure defined by $u=g-\opa_b^\Phi v_n$, the associated $(0,1)$-form $\wt u$ for the initial structure satisfies $\wt u=\wt g-d''_\Phi v_n$. Note that, since $(\opa_b^\Phi)^2=0$ and $\opa_b^\Phi g=0$, we have $\opa_b^\Phi u=0$ and this implies
\begin{align*}
\opa_b\wt u(\ol L_1,\ol L_2)&=\ol L_1\wt u(\ol L_2)-\ol L_2\wt u(\ol L_1)-\wt u([\ol L_1,\ol L_2])\\
&=\ol L_1^\Phi u(\ol L_2^\Phi)-\ol L_2^\Phi u(\ol L_1^\Phi)+\Phi(\ol L_1)u(\ol L_2^\Phi)-\Phi(\ol L_2)u(\ol L_1^\Phi)-\wt u([\ol L_1,\ol L_2])\\
&=\opa_b^\Phi u(\ol L_1^\Phi,\ol L_2^\Phi)+u([\ol L_1^\Phi,\ol L_2^\Phi])-u([\ol L_1,\ol L_2]-\Phi([\ol L_1,\ol L_2]))\\
&+\Phi(\ol L_1)u(\ol L_2^\Phi)-\Phi(\ol L_2)u(\ol L_1^\Phi)\\
&=u([\ol L_1^\Phi,\ol L_2^\Phi])-[\ol L_1,\ol L_2]+\Phi([\ol L_1,\ol L_2])\\
&+\Phi(\ol L_1)u(\ol L_2^\Phi)-\Phi(\ol L_2)u(\ol L_1^\Phi)
\end{align*}

Since both structures $H_{0,1}\mb$ and $\widehat H_{0,1}\mb$ are integrable, the vector field $[\ol L_1^\Phi,\ol L_2^\Phi])-[\ol L_1,\ol L_2]+\Phi([\ol L_1,\ol L_2]$ is a section of $\widehat H_{0,1}\mb$, hence there exists $\ol W_\Phi(\ol L_1,\ol L_2)$ such that 
$$[\ol L_1^\Phi,\ol L_2^\Phi])-[\ol L_1,\ol L_2]+\Phi([\ol L_1,\ol L_2]=\ol W_\Phi(\ol L_1,\ol L_2)-\Phi(\ol W_\Phi(\ol L_1,\ol L_2)).$$
Moreover
$$[\ol L_1^\Phi,\ol L_2^\Phi])-[\ol L_1,\ol L_2]+\Phi([\ol L_1,\ol L_2]=[\Phi(\ol L_1),\Phi(\ol L_2)]-\opa_b\Phi(\ol L_1,\ol L_2)-\Phi(\ol L_1)\ol L_2+\Phi(\ol L_2)\ol L_1)$$ and $\ol W_\Phi(\ol L_1,\ol L_2)$ depends on $\Phi$ at the order $1$.
 
Finally 
$$\opa_b\wt u(\ol L_1,\ol L_2)=\wt u(\ol W_\Phi(\ol L_1,\ol L_2))+\Phi(\ol L_1)\wt u(\ol L_2)-\Phi(\ol L_2)\wt u(\ol L_1)$$
and going back to $u=g-\opa_b^\Phi v_n$ we get
\begin{equation}\label{erreur}
\wt g-d''_\Phi
v_{n+1}=\Phi\lrcorner\pa_b(v_{n+1}-v_n)+A_{2}(\tau_\Phi(\wt g-d''_\Phi
v_n)),
\end{equation}
where $\tau_\Phi$ is defined so that $\tau_\Phi(\wt u)(\ol L_1, \ol L_2)=\wt u(\ol W_\Phi(\ol L_1,\ol L_2))+\Phi(\ol L_1)\wt u(\ol L_2)-\Phi(\ol L_2)\wt u(\ol L_1)$.

Note that since $g\in\cc^{l+2}_{0,1}(\mb,E)$ and $\Phi$ is of class
$\cc^{l+2}$, then $v_0\in\bc^{2l+1}(\mb)$ and $\opa_b
v_0-\Phi\lrcorner\pa_b v_0=d''_\Phi v_0$ is in
$\bc^{2l}_{0,1}(\mb,E)$, moreover, by definition of an horizontal perturbation, for any section $\ol L$ of  $H_{0,1}\mb$  the vector field $\Phi(\ol L)$ is complex tangent.

Thus it follows by induction that $d''_\Phi
v_n\in\bc^{2l}_{0,1}(\mb,E)$ for all $n\in\nb$ and, if $\Phi$ is of class
$\cc^{l+3}$, by (\ref{erreur}) we have the
estimate
\begin{equation}\label{est1}
\|\wt g-d''_\Phi v_{n+1}\|_{\bc^{2l}}
\leq\|\Phi\lrcorner\pa_b\|\|(v_{n+1}-v_n)\|_{\bc^{2l+1}}+\|A_{2}\|\|\wt g-d''_\Phi
v_n\|_{\bc^{2l}}\|\Phi\|_{l+3}.
\end{equation}
Let $\delta$ such that if $\|\Phi\|_{l+3}<\delta$, then the maximum
$\|A_1\circ\Phi\lrcorner\pa_b\|$ and $\|\Phi\|_{l+3}\|A_2\|$ is equal to $\epsilon<1$. Assume
$\|\Phi\|_{l+3}<\delta$, then by induction we get
\begin{equation}\label{est2}
\|\wt g-d''_\Phi v_{n+1}\|_{\bc^{2l}} \leq
(n+1)\epsilon^{n+1}\|\Phi\lrcorner\pa_b\|\|v_0\|_{\bc^{2l+1}}+\epsilon^{n+1}\|\wt g-d''_\Phi
v_0\|_{\bc^{2l}}.
\end{equation}
But $\wt g-d''_\Phi v_0=\Phi\lrcorner\pa_b
A_{1}\wt g+A_{2}(\tau_\Phi(\wt g))$ and hence $\|\wt g-d''_\Phi
v_0\|_{\bc^{2l}}\leq\|\Phi\lrcorner\pa_b\|\|A_{1}\wt g\|_{\bc^{2l+1}}
+\epsilon \|\wt g\|_{\bc^{2l}}$. This implies
\begin{equation}\label{est3}
\|\wt g-d''_\Phi v_{n+1}\|_{\bc^{2l}}\leq
(n+2)\epsilon^{n+1}\|\Phi\lrcorner\pa_b\|\|A_{1}\|\|\wt g\|_{\bc^{2l+1}}+\epsilon^{n+2}\|\wt g\|_{\bc^{2l}}.
\end{equation}
Since $\epsilon<1$, the righthand side of (\ref{est3}) tends to
zero, when $n$ tends to infinity and by continuity of the operator
$d''_\Phi$ from $\bc^{2l+1}(\mb,E)$ into
$\bc^{2l}_{0,1}(\mb,E)$, we get that $v$ is a solution of
(\ref{dbarphi}).

\subsection{Solution of the embedding
problem}\label{s2.3}

In the setting of the beginning of section \ref{s2}, let us consider
the following anisotropic H\"older spaces of functions:

- $\ac^\alpha(M_0)$, $0<\alpha<1$, is the set of continuous
functions on $M_0$ which are in $\cc^{\alpha/2}(M_0)$.

- $\ac^{1+\alpha}(M_0)$, $0<\alpha<1$, is the set of functions $f$
such that $f\in\cc^{(1+\alpha)/2}(M_0)$ and $X_\cb
f\in\cc^{\alpha/2}(M_0)$, for all complex tangent vector fields
$X_\cb$ to $M_0$.  Set
\begin{equation}\label{holderanis}
  \|f\|_{A\alpha}=\|f\|_{(1+\alpha)/2}+\sup_{\|X_\cb\|\leq 1}\|X_\cb f\|_{\alpha/2}
\end{equation}

- $\ac^{l+\alpha}(M_0)$, $l\geq 2$, $0<\alpha<1$, is the set of
functions $f$ of class $\cc^{[l/2]}$ such that
$Xf\in\ac^{l-2+\alpha}(M_0)$, for all tangent vector fields $X$ to
$M$ and $X_\cb f\in\ac^{l-1+\alpha}(M_0)$, for all complex tangent
vector fields $X_\cb$ to $M_0$.

Fix some $0<\alpha<1$ and set $\bc^l(M_0)=\ac^{l+\alpha}(M_0)$.
This sequence $(\bc^l(M_0), l\in\nb)$ is a sequence of Banach spaces
which satisfies properties (i) to (iv) listed in section \ref{s2.1}

Moreover it is proved in \cite{Laanis} that the  operators $A_r$,
$r=1,2$, from Theorem \ref{formuleglobale}  are linear continuous
operators between the anisotropic H\"older spaces
$\bc^l_{0,r}(M_0,E)$ and $\bc^{l+1}_{0,r-1}(M_0,E)$.

One can also consider the anisotropic H\"older
spaces introduced by Folland and Stein when they studied the
tangential Cauchy-Riemann complex on the
Heisenberg group and more generally on strictly pseudoconvex CR
manifolds.

Let $M$ be a generic $CR$ manifold of class $\cc^\infty$ of real
dimension $2n$ and CR dimension $2n-k$
and $D$ be a relatively compact domain in $M$.  Let
$X_1,\dots,X_{2n-2k}$ be a real basis of
$HM$. A $\cc^1$ curve
$\gamma~:~[0,r]\to M$ is called admissible if for every $t\in [0,r]$,
\begin{equation*}
\frac{d\gamma}{dt}(t)=\sum_{j=1}^{2n-2k}c_j(t)X_j(\gamma(t))
\end{equation*}
where $\sum|c_j(t)|^2\leq 1$.

The Folland-Stein anisotropic
H\"older spaces
$\Gamma^{p+\alpha}(\ol D\cap M)$  are defined in the
following way:

- $\Gamma^\alpha(\ol D\cap M)$, $0<\alpha<1$, is the set of
continuous fonctions in $\ol D\cap M$ such that if for every
$x_0\in\ol D\cap M$
$$\sup_{\gamma(.)}\frac{|f(\gamma(t)-f(x_0)|}{|t|^\alpha}<\infty$$
for any admissible complex tangent curve $\gamma$ through $x_0$.

- $\Gamma^{p+\alpha}(\ol D\cap M)$, $p\ge 1$, $0<\alpha<1$, is the
set of continuous fonctions in $M$ such that $X_\cb
f\in\Gamma^{p-1+\alpha}(\ol D\cap M)$, for all complex tangent
vector fields $X_\cb$ to $M$.

Fix some $0<\alpha<1$, the sequence $(\Gamma^{p+\alpha}, p\in\nb)$ is
also a sequence of Banach spaces
which satisfies properties (i) to (iv) listed in section \ref{s2.1}

Continuity properties for the operators $A_r$ and $B_r$, $r=1,2$,
defined in Theorem
\ref{formuleglobale} are proved in
\cite{Laanis}. More precisely,
for all $p\in\nb$ and $0<\alpha<1$, the
operators $A_r$, $r=1,2$, from Theorem
\ref{formuleglobale} are continuous from
$\Gamma^{p+\alpha}_{0,r}(M)$ into $\Gamma_{0,r-1}^{p+1+\alpha}(M)$.
\medskip

We can apply now the method developed in section \ref{s2.2} with
$E=T_{1,0}X_{|_{M_0}}$ to solve the equation
$$\opa_b^\Phi \sigma=-\opa_b^\Phi Id$$
and a unique solution is given by a fixed point $\sigma$ of the map $\Theta$
if $\|\Phi\|_{l+3}$ is sufficiently small, moreover this fixed point is contained in the neighborhood $U$ of the zero section in $TX$ on which the exponential map is defined, once again if $\|\Phi\|_{l+3}$ is small enough, since $\|\sigma\|_l<\|\Phi\|_{l+2}$. We deduce that
$\ec=F\circ\ec_0$, with $F$ defined by $F(x)=\exp_x\sigma(x)$, is
the embedding we are looking for, which ends the proof of Theorem
\ref{perturb-var}.

\section{Stability of embeddability in $\cb P^N$}\label{s4}

In this section we will consider the case when the compact CR manifold $(\mb, H_{0,1}\mb)$ is embeddable in some $\cb P^N$. Let $\ec_0~:~\mb\to M_0\subset \cb P^N$ be a $\ci$-smooth CR embedding, it is defined by some homogeneous coordinates $(f_1,\dots,f_N)$, each $f_j$, $j=1,\dots,N$, being a CR function such that on the set $U_j=\{x\in\mb~|~f_j\neq 0\}$, $(\frac{f_1}{f_j},\dots,\frac{f_{j-1}}{f_j},\frac{f_{j+1}}{f_j},\dots,\frac{f_N}{f_j})$ defines a diffeomorphism.

Let us consider an horizontal perturbation $\widehat H_{0,1}\mb$ of $H_{0,1}\mb$, we are looking for a CR embedding $\ec$ of $(\mb,\widehat H_{0,1}\mb)$ in $\cb P^N$. As we will consider only small perturbations of the original structure, we are lead to look for some $\ec$ given by some small perturbation $(f_1-g_1,\dots,f_N-g_N)$ of the original homogeneous coordinates $(f_1,\dots,f_N)$, which have to satisfy $\opa_b^\Phi (f_j-g_j)=0$ for all $j=1,\dots,N$. Using the definition of the operator  $\opa_b^\Phi$, this is equivalent  to
\begin{equation}\label{dbar-proj}
\opa_b^\Phi g_j=\Phi\lrcorner \pa_b f_j\quad {\rm for~all}~ j=1,\dots,N.
\end{equation}
Note that the second member of these equations is controlled by the form $\Phi$ which defines the perturbation, so that if we can solve the equation
$\opa_b^\Phi g=f$ with $\cc^l$-estimates uniformly with respect to $\Phi$, then for $\Phi$ sufficiently small the homogeneous coordinates $(f_1-g_1,\dots,f_N-g_N)$ will define the CR embedding we are looking for.

\subsection{Solving the $\opa_b^\Phi$ with estimates}

 We are interested in solving the equation
\begin{equation}\label{dbar-perturb}
\opa_b^\Phi \alpha=\beta
\end{equation}
with estimates, uniformly with respect to $\Phi$, in bidegree $(0,1)$ for an horizontal perturbation, given by a form $\Phi$, of the CR structure of an abstract compact CR
manifold $(\mb, H_{0,1}\mb)$ of class $\cc^\infty$,
 when $\Phi$ is sufficiently small. We will follow the method used by \cite{ShWa} leading to a strong Hodge decomposition theorem and homotopy formulas.

We equip $\mb$ with a hermitian metric such that $H_{0,1}\mb$ and $H_{1,0}\mb$ are orthogonal and we consider the Kohn-Laplacian
$$\Box_b=\opa_b\opa_b^*+\opa_b^*\opa_b.$$

For $\mb$ equipped with the perturbed structure $\widehat H_{0,1}\mb$, we can also consider the Kohn-Laplacian
$$\Box_b^\Phi=\opa_b^\Phi{\opa_b^\Phi}^*+{\opa_b^\Phi}^*\opa_b^\Phi$$
replacing the $\opa_b$-operator by the new operator $\opa_b^\Phi$.
We first establish some {\sl a priori} estimates for $\Box_b^\Phi$.
\begin{thm}\label{apriori}
Let $(\mb, H_{0,1}\mb)$ be an abstract compact CR
manifold of class $\cc^\infty$ and $\widehat
H_{0,1}\mb$ be an horizontal perturbation of $H_{0,1}\mb$ defined by
a $(0,1)$-form $\Phi\in\cc_{0,1}^{l}(\mb,H_{1,0}\mb)$, $l\geq 1$, such that $\|\Phi\|_{\cc^0}<1$. Assume that the Levi form of $\mb$ satisfies condition $Y(1)$ at $x_0\in\mb$. Then there exists $\delta>0$ and a sufficiently small neighborhood $U$ of $x_0$ such that, if $\|\Phi\|_{\cc^1}<\delta$,
$$\|\alpha\|^2_{\frac{1}{2}}\leq C(1+K\|\Phi\|_{\cc^1})(\|\opa_b^\Phi\alpha\|^2+\|{\opa_b^\Phi}^*\alpha\|^2+\|\alpha\|^2)$$
for all $\alpha\in\dc_{0,1}(U)$, where $\|.\|$ and $\|.\|_s$ are respectively the $L^2$ and the Sobolev norms.
\end{thm}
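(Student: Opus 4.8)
The plan is to establish the subelliptic $\tfrac12$-estimate for $\Box_b^\Phi$ by treating the perturbed operator as a small perturbation of the unperturbed one, for which the corresponding estimate is classical under condition $Y(1)$. The key observation is that, because the perturbation is \emph{horizontal}, the new structure $\widehat H_{0,1}\mb$ shares the same complex tangent bundle $H\mb$ and hence the same characteristic bundle and the same Levi form as the original structure; consequently condition $Y(1)$ holds at $x_0$ for $\widehat\mb$ exactly as it does for $\mb$. The strategy is therefore not to reprove the estimate from scratch, but to control the difference between the quadratic forms attached to $\Box_b$ and to $\Box_b^\Phi$ by a quantity that is small when $\|\Phi\|_{\cc^1}$ is small.

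\medskip

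First I would recall the Kohn–Hörmander a priori estimate in its sharp subelliptic form: under condition $Y(1)$ there exist a neighborhood $U$ of $x_0$ and a constant $C$ such that
\begin{equation*}
\|\alpha\|^2_{\frac12}\leq C\bigl(\|\opa_b\alpha\|^2+\|\opa_b^*\alpha\|^2+\|\alpha\|^2\bigr)
\end{equation*}
for all $\alpha\in\dc_{0,1}(U)$. Next I would write, using the pointwise relation $\ol L^\Phi=\ol L-\Phi(\ol L)$ together with the explicit formulas for $\opa_b^\Phi$ and its formal adjoint ${\opa_b^\Phi}^*$ computed in the chosen hermitian metric, the operator identities
\begin{equation*}
\opa_b^\Phi=\opa_b+P_\Phi,\qquad {\opa_b^\Phi}^*={\opa_b}^*+Q_\Phi,
\end{equation*}
where $P_\Phi$ and $Q_\Phi$ are first-order operators whose coefficients are controlled by $\Phi$ and its first derivatives, so that their operator norms on the relevant spaces are $O(\|\Phi\|_{\cc^1})$. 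The point to make precise here is that $P_\Phi$ involves $\Phi$ together with $\pa_b$, hence is genuinely first order, while the terms created by passing to adjoints bring in at most one derivative of $\Phi$, which is why $\|\Phi\|_{\cc^1}$ is the right quantity to track and why we assume $\|\Phi\|_{\cc^0}<1$ to keep the change of frame invertible.

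\medskip

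With these decompositions in hand I would estimate
\begin{equation*}
\|\opa_b\alpha\|^2+\|\opa_b^*\alpha\|^2\leq (1+K\|\Phi\|_{\cc^1})\bigl(\|\opa_b^\Phi\alpha\|^2+\|{\opa_b^\Phi}^*\alpha\|^2\bigr)+K'\|\Phi\|_{\cc^1}\,\|\alpha\|^2_{\frac12},
\end{equation*}
the dangerous term being the one carrying a half-derivative of $\alpha$, which arises because $P_\Phi$ and $Q_\Phi$ are first order. Substituting into the unperturbed estimate and absorbing the $\|\Phi\|_{\cc^1}\|\alpha\|^2_{\frac12}$ term on the left—legitimate precisely when $\|\Phi\|_{\cc^1}<\delta$ is small enough that $CK'\delta<1$—yields the claimed inequality with a constant of the stated form $C(1+K\|\Phi\|_{\cc^1})$ after shrinking $U$ if necessary. \textbf{The main obstacle} is the bookkeeping of that half-derivative term: one must verify that the commutators and the frame-change coefficients appearing in $P_\Phi$ and $Q_\Phi$ never produce a contribution bounded below independently of $\Phi$, so that the entire error is genuinely proportional to $\|\Phi\|_{\cc^1}$ and hence absorbable. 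This is where the horizontality hypothesis does its essential work, since it guarantees that $\Phi(\ol L)$ is complex tangent and thus the perturbation acts within $H\mb$ without disturbing the characteristic directions that govern the subelliptic gain.
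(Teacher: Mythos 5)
Your overall strategy (perturb off the unperturbed estimate and absorb the error when $\|\Phi\|_{\cc^1}$ is small) is the same as the paper's, but the specific absorption step you propose fails, and it fails exactly at the point you flag as ``the main obstacle.'' The operators $P_\Phi=\opa_b^\Phi-\opa_b$ and $Q_\Phi={\opa_b^\Phi}^*-\opa_b^*$ are genuinely first order: their principal parts are derivatives of $\alpha$ along complex tangent vector fields (terms of the form $\Phi(\ol L_j)\,\alpha_I$ and their adjoints). Hence the error you must control is of size $\|\Phi\|_{\cc^1}^2\|\nabla_H\alpha\|^2$ plus lower-order terms, where $\nabla_H\alpha$ denotes full first-order derivatives of $\alpha$ along $H\mb$, and there is no inequality $\|\nabla_H\alpha\|^2\lesssim\|\alpha\|_{1/2}^2$: a first-order derivative costs a whole Sobolev unit, while the subelliptic estimate gains only one half. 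So your claimed bound with error $K'\|\Phi\|_{\cc^1}\|\alpha\|_{1/2}^2$ is not justified (a high-frequency test form makes the ratio $\|\nabla_H\alpha\|^2/\|\alpha\|_{1/2}^2$ blow up), and the absorption into the left-hand side of the $1/2$-estimate collapses.

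The cure --- and this is what the paper actually does --- is to run the perturbation argument not at the level of the $H^{1/2}$ estimate but at the level of the maximal (vector-field) a priori estimate of Theorem 3.1 of \cite{ShWa}: under condition Y(1),
$$\|\alpha\|_\lc^2+\|\alpha\|_{\ol\lc}^2\leq C_0\big(\|\opa_b\alpha\|^2+\|\opa_b^*\alpha\|^2+\|\alpha\|^2\big),$$
where $\|\alpha\|_\lc^2=\sum_i\|L_i\alpha\|^2+\|\alpha\|^2$ and similarly for $\ol\lc$. The left-hand side now controls precisely the first-order derivatives along $H\mb$ that occur in the error terms --- this is where horizontality does its essential work, not where you placed it --- so after writing $\opa_b^\Phi$ and ${\opa_b^\Phi}^*$ in the perturbed coframe $\ol\omega_j^\Phi=\ol\omega_j+\theta_j^\Phi$ one gets an error $O\big(\|\Phi\|_{\cc^1}(\|\alpha\|_\lc^2+\|\alpha\|_{\ol\lc}^2+\|\alpha\|^2)\big)$, which is absorbable as soon as $C'\|\Phi\|_{\cc^1}<1$. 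Only after this absorption does one pass to the Sobolev $1/2$ norm, using that under Y(1) the real and imaginary parts of the $L_j$ satisfy H\"ormander's finite type condition of step $2$, whence $\|\alpha\|_{1/2}^2\lesssim\|\alpha\|_\lc^2+\|\alpha\|_{\ol\lc}^2$. Your proof as written is missing both this stronger intermediate estimate and the final H\"ormander step, and without them the argument does not close.
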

\begin{proof}
Let $\ol L_1,\dots,\ol L_{n-k}$ be an orthonormal basis for $H_{0,1}\mb$ and $\ol\omega_1,\dots,\ol\omega_{n-k}$ its dual basis. Since $\|\Phi\|_{\cc^0}<1$, then $\ol L_1^\Phi,\dots,\ol L_{n-k}^\Phi,L_1^\Phi,\dots, L_{n-k}^\Phi$, where $L_j^\Phi=L_j-\ol{\Phi(\ol L_j)}$, $j=1,\dots,n-k$, defines a basis of $H\mb$. We denote by $\ol\omega_1^\Phi,\dots,\ol\omega_{n-k}^\Phi$ the dual basis of $\ol L_1^\Phi,\dots,\ol L_{n-k}^\Phi$, then, for all $j=1,\dots,n-k$, we have $\ol\omega_j^\Phi=\ol\omega_j+(^t(I-\Phi)^{-1}-I)(\ol\omega_j)=\ol\omega_j+\theta_j^\Phi$.

If $f$ is a smooth function on $\mb$, we set
$$\|f\|_\lc^2=\sum_{i=1}^{n-k}\|L_i f\|^2+\|f\|^2,\quad \|f\|_{\ol\lc}^2=\sum_{i=1}^{n-k}\|\ol L_i f\|^2+\|f\|^2.$$

Let $\alpha$ be a $(0,r)$-form for the new structure $\widehat H_{0,1}\mb$ with smooth coefficients, then
$$\alpha=\sum_{|I|=r}\alpha_I~\ol\omega^\Phi_I,$$
where $\ol\omega^\Phi_I=\ol\omega^\Phi_{i_1}\wedge\dots\wedge\ol\omega^\Phi_{i_r}$ if $I$ is the multi-indice $I=(i_1,\dots,i_r)$ and
we have
$$\opa_b^\Phi\alpha=\sum_{j=1}^{n-k}\sum_{|I|=r}\ol L_j^\Phi~\alpha_I~\ol\omega^\Phi_j\wedge\ol\omega^\Phi_I+\dots,$$
where $\dots$ denote terms which do not involve the derivatives of $\alpha$.

If $r=0$, then
\begin{align*}
\opa_b^\Phi\alpha&=\sum_{j=1}^{n-k}(\ol L_j-\Phi(\ol L_j))\alpha~ \ol\omega^\Phi_j\\
&=\opa_b\alpha+\sum_{j=1}^{n-k}[\ol L_j~\alpha~ \ol\theta^\Phi_j-\Phi(\ol L_j)~\alpha~ \ol\omega^\Phi_j]
\end{align*}
and since the perturbation is horizontal, i.e. $\Phi\in\cc_{0,1}^{l}(\mb,H_{1,0}\mb)$, we get
$$\|\opa_b\alpha\|^2\leq
\|\opa_b^\Phi\alpha\|^2+O(\|\Phi\|_{\cc^0}(\|\alpha\|_\lc^2+\|\alpha\|_{\ol\lc}^2)).$$

If $r=1$, then
\begin{align*}
\opa_b^\Phi\alpha&=\sum_{i,j=1}^{n-k}(\ol L_j-\Phi(\ol L_j))\alpha_i~ \ol\omega^\Phi_j\wedge\ol\omega^\Phi_i+\dots\\
&=\opa_b\alpha+\sum_{i,j=1}^{n-k}(\ol L_j~\alpha_i~ \ol\theta^\Phi_j\wedge\ol\omega^\Phi_i-\Phi(\ol L_j)~\alpha_i~ \ol\omega^\Phi_j\wedge\ol\omega^\Phi_i)+R(\Phi,\alpha),
\end{align*}
where $R(\Phi,\alpha)$ is controlled by $(1+\|\Phi\|_{\cc^1})\|\alpha\|$,
and if $(\ol L_j^\Phi)^*$ denotes the Hilbert adjoint of $\ol L_j^\Phi$
\begin{align*}
{\opa_b^\Phi}^*\alpha&=\sum_{i,j=1}^{n-k}(\delta_i^j+\lambda_{ij}(\Phi))(\ol L_j^\Phi)^*\alpha_i+R^*(\Phi,\alpha)\\
&=\sum_{i,j=1}^{n-k}(\delta_i^j+\lambda_{ij}(\Phi))(-L_j-(\Phi(\ol L_j))^*)\alpha_i+R^*(\Phi,\alpha)\\
&=\opa_b^*\alpha-\sum_{i,j=1}^{n-k}(\Phi(\ol L_j))^*~\alpha_j+\lambda_{ij}(\Phi)(-L_j-(\Phi(\ol L_j))^*)\alpha_i+R^*(\Phi,\alpha),
\end{align*}
where $\lambda_{ij}(\Phi)$ is controlled by $\|\Phi\|_{\cc^0}$, $\|(\Phi(\ol L_j))^*f\|=O(\|\Phi\|_{\cc^1}\|f\|_{\ol\lc})$ and $R^*(\Phi,\alpha)$ is controlled by $\|\Phi\|_{\cc^1}\|\alpha\|$.

All this implies
$$\|\opa_b\alpha\|^2\leq\|\opa_b^\Phi\alpha\|^2+O(\|\Phi\|_{\cc^0}(\|\alpha\|_\lc^2+\|\alpha\|_{\ol\lc}^2))+O((1+\|\Phi\|_{\cc^1})^2\|\alpha\|^2)$$
and
$$\|\opa_b^*\alpha\|^2\leq\|{\opa_b^\Phi}^*\alpha\|^2+O(\|\Phi\|_{\cc^1}(\|\alpha\|_\lc^2+\|\alpha\|_{\ol\lc}^2))+O(\|\Phi\|_{\cc^1}^2\|\alpha\|^2).$$

The {\sl a priori} estimates proved in Theorem 3.1 of \cite{ShWa} gives the existence of a constant $C_0$ such that for any $\alpha\in\ci_{0,r}(\mb)$, $r=0,1$, the following estimate is satisfied
$$\|\alpha\|_\lc^2+\|\alpha\|_{\ol\lc}^2\leq C_0(\|\opa_b\alpha\|^2+\|\opa_b^*\alpha\|^2+\|\alpha\|^2).$$
Using the previous calculations we get the existence of two constants $C$ and $C'$ such that
$$\|\alpha\|_\lc^2+\|\alpha\|_{\ol\lc}^2\leq C(\|\opa_b^\Phi\alpha\|^2+\|{\opa_b^\Phi}^*\alpha\|^2+\|\alpha\|^2)+C'\|\Phi\|_{\cc^1}(\|\alpha\|_\lc^2+\|\alpha\|_{\ol\lc}^2+\|\alpha\|^2).$$
Choose $\delta<1$ sufficiently small such that $C'\delta<1$, then if $\|\Phi\|_{\cc^1}<\delta$ there exists a constant $C''$ such that
$$\|\alpha\|_\lc^2+\|\alpha\|_{\ol\lc}^2\leq C''(1+C'\|\Phi\|_{\cc^1})(\|\opa_b^\Phi\alpha\|^2+\|{\opa_b^\Phi}^*\alpha\|^2+\|\alpha\|^2).$$
This implies the theorem since under the condition Y(1) the real and the imaginary part of the vector fields $L_j$ satisfy H\"{o}rmander's finite type condition of type 2.
\end{proof}

In the spirit of the pionnier works by Kohn-Nirenberg and Folland-Kohn, following the methods used in section 3 of \cite{ShWa}, the {\sl a priori} estimates obtained in Theorem \ref{apriori} gives the existence and the regularity of solutions for the $\Box_b^\Phi$ and the $\opa_b^\Phi$ equations on compact CR manifolds. Denote by
$$\hc_{0,1}^\Phi(\mb)=\ker\Box_b^\Phi=\{\alpha\in L^2_{0,1}(\mb)~|~\opa_b^\Phi\alpha={\opa_b^\Phi}^*\alpha=0\}$$
and by $H_b^\Phi$ the projection operator from $L^2_{0,1}$ onto $\hc_{0,1}^\Phi(\mb)$. We have the following strong Hodge decomposition
\begin{thm}\label{hodge}
Let $(\mb, H_{0,1}\mb)$ be an abstract compact CR
manifold of class $\cc^\infty$ and $\widehat
H_{0,1}\mb$ be an horizontal perturbation of $H_{0,1}\mb$ defined by
a $(0,1)$-form $\Phi\in\cc_{0,1}^{l}(\mb,H_{1,0}\mb)$, $l\geq 1$. Suppose that the Levi form of
$\mb$ satisfies condition Y(1) and $\|\Phi\|_{\cc^{1}}$ is sufficiently small, then there exists a compact operator
$G_b^\Phi~:~L^2_{0,r}(\mb)\to Dom(\square_b^\Phi)$, $r=0,1$ such that

(i) For all $s\in\nb$ and $r=0,1$, $G_b^\Phi$ is continuous from $W^s_{0,r}(\mb)$ into
$W^{s+1}_{0,r}(\mb)$, more precisely there exists $\delta>0$ such that, if $\|\Phi\|_{\cc^{2+s}}<\delta$, there exists a constant $C_s$ independent of $\Phi$ such that
$$\|G_b^\Phi(\alpha)\|_{s+1}\leq C_s\|\alpha\|_s,$$
for $\alpha \in W^s_{0,r}(\mb)$.

(ii) For any $f\in L^2(\mb)$
\begin{equation*}
f={\opa_b^\Phi}^*\opa_b^\Phi G_b^\Phi f+H_b^\Phi f,
\end{equation*}
and for any $\alpha\in L^2_{0,1}(\mb)$
\begin{equation*}
\alpha=\opa_b^\Phi{\opa_b^\Phi}^*G_b^\Phi\alpha+{\opa_b^\Phi}^*\opa_b^\Phi G_b^\Phi\alpha+H_b^\Phi \alpha.
\end{equation*}

(iii) $G_b^\Phi H_b^\Phi=H_b^\Phi G_b^\Phi=0$, $G_b^\Phi\Box_b^\Phi=\Box_b^\Phi G_b^\Phi=I-H_b^\Phi$ on $Dom(\Box_b^\Phi)$
and if $G_b^\Phi$ is also defined on $L^2_{0,2}(\mb)$ (respectively
$L^2_{0,0}(\mb)$), $G_b^\Phi\opa_b^\Phi=\opa_b^\Phi G_b^\Phi$ on $Dom(\opa_b^\Phi)$ (respectiely
$G_b^\Phi{\opa_b^\Phi}^*={\opa_b^\Phi}^*G_b^\Phi$ on $Dom({\opa_b^\Phi}^*)$).
\end{thm}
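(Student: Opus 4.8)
The plan is to run, for the perturbed operator $\Box_b^\Phi$, the Hodge-theoretic machinery developed for $\Box_b$ in Section~3 of \cite{ShWa}, transposing each step and keeping explicit track of the dependence on $\Phi$ so that the constants in (i) come out independent of $\Phi$. The single analytic input is the subelliptic estimate of Theorem~\ref{apriori}; everything else is a combination of functional analysis and the Kohn--Nirenberg regularity method.

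First I would globalize the local estimate of Theorem~\ref{apriori}. As $\mb$ is compact and satisfies $Y(1)$ at every point, I cover it by finitely many neighborhoods of the type produced by that theorem, pick a subordinate partition of unity $(\chi_\nu)$, and apply the local estimate to each $\chi_\nu\alpha$ in bidegree $(0,r)$, $r=0,1$. The commutators $[\opa_b^\Phi,\chi_\nu]$ and $[{\opa_b^\Phi}^*,\chi_\nu]$ are of order zero, so summing over $\nu$ yields the global estimate
\begin{equation*}
\|\alpha\|_{1/2}^2 \leq C(1+K\|\Phi\|_{\cc^1})\bigl(\|\opa_b^\Phi\alpha\|^2+\|{\opa_b^\Phi}^*\alpha\|^2+\|\alpha\|^2\bigr),
\end{equation*}
first for smooth forms and then, since $\mb$ has no boundary, for every $\alpha$ in $\Dom(\opa_b^\Phi)\cap\Dom({\opa_b^\Phi}^*)$ by the usual density (Friedrichs) argument. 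This basic estimate is exactly what the abstract theory of self-adjoint operators with a subelliptic estimate requires: it forces $\Box_b^\Phi$ to have closed range, makes $\hc_{0,r}^\Phi(\mb)=\ke\Box_b^\Phi$ finite-dimensional, and makes $\Box_b^\Phi$ boundedly invertible on $(\hc_{0,r}^\Phi)^\perp$. Taking $G_b^\Phi$ to be this inverse, extended by zero on $\hc_{0,r}^\Phi$, and $H_b^\Phi$ the $L^2$-orthogonal projection onto $\hc_{0,r}^\Phi$, one reads off $\Box_b^\Phi G_b^\Phi=G_b^\Phi\Box_b^\Phi=I-H_b^\Phi$ and, after expanding $\Box_b^\Phi$, the decompositions of (ii); the $1/2$-gain together with the Rellich lemma gives the compactness of $G_b^\Phi$.

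The heart of the matter is the Sobolev regularity (i), and especially the uniformity of $C_s$ in $\Phi$. Following the elliptic-regularization scheme of Kohn--Nirenberg as implemented in \cite{ShWa}, I would replace $\Box_b^\Phi$ by the elliptic family $\Box_b^\Phi+\varepsilon\Delta$, solve there, and let $\varepsilon\to 0$ along $\varepsilon$-uniform bounds; these are obtained by commuting tangential derivatives past $\Box_b^\Phi$ and iterating the basic estimate, the second order of $\Box_b^\Phi$ upgrading the $1/2$-gain to a full gain of one derivative, whence $G_b^\Phi$ maps $W^s_{0,r}(\mb)$ into $W^{s+1}_{0,r}(\mb)$. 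The coefficients of $\Box_b^\Phi$ involve $\Phi$ together with its derivatives up to order two, and in the bootstrap each additional order of Sobolev regularity costs one further derivative of $\Phi$; gaining regularity up to level $s$ therefore draws on derivatives of $\Phi$ up to order $s+2$, which is exactly why the hypothesis reads $\|\Phi\|_{\cc^{2+s}}<\delta$. Bounding every $\Phi$-dependent error term by $\|\Phi\|_{\cc^{2+s}}$ and taking $\delta$ small lets one absorb all of them into the left-hand side, giving $\|G_b^\Phi\alpha\|_{s+1}\leq C_s\|\alpha\|_s$ with $C_s$ independent of $\Phi$.

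Finally the commutation relations (iii) are formal once $G_b^\Phi$ is available. From $(\opa_b^\Phi)^2=0$ one has $\opa_b^\Phi\Box_b^\Phi=\Box_b^\Phi\opa_b^\Phi$ and ${\opa_b^\Phi}^*\Box_b^\Phi=\Box_b^\Phi{\opa_b^\Phi}^*$ on smooth forms, and since both operators preserve $\hc_{0,r}^\Phi$, uniqueness of the inverse off the harmonic space forces $G_b^\Phi\opa_b^\Phi=\opa_b^\Phi G_b^\Phi$ and $G_b^\Phi{\opa_b^\Phi}^*={\opa_b^\Phi}^*G_b^\Phi$ on the relevant domains, while $G_b^\Phi H_b^\Phi=H_b^\Phi G_b^\Phi=0$ is immediate from orthogonality. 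I expect the only real obstacle to be the third step: the classical subelliptic regularity estimates have to be revisited line by line with explicit control of the $\Phi$-dependence of every commutator term, so that the constants $C_s$ do not degenerate as the CR structure is perturbed.
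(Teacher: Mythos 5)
Your proposal is correct and follows essentially the same route as the paper, which establishes Theorem \ref{hodge} precisely by transposing the Kohn--Nirenberg/Folland--Kohn machinery of Section 3 of \cite{ShWa} to $\Box_b^\Phi$, with the a priori estimate of Theorem \ref{apriori} as the sole new analytic input. Your explicit bookkeeping of the $\Phi$-dependence (globalizing the local estimate by a partition of unity, then absorbing error terms controlled by $\|\Phi\|_{\cc^{2+s}}$ to keep the constants $C_s$ uniform) is exactly what the paper's appeal to \cite{ShWa} implicitly requires and accounts for the hypothesis $\|\Phi\|_{\cc^{2+s}}<\delta$ in (i).
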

From this theorem we can deduce some results on the solvability of \eqref{dbar-perturb}.
\begin{cor}\label{estimate}
Under the hypotheses of Theorem \ref{hodge}, for any $f\in L^2(\mb)$
\begin{equation*}
\opa_b^\Phi f=\opa_b^\Phi{\opa_b^\Phi}^* G_b^\Phi\opa_b^\Phi f.
\end{equation*}
Moreover there exists $\delta>0$ such that, if $\|\Phi\|_{\cc^{2+s}}<\delta$, there exists a constant $C_s$ independent of $\Phi$ such that
$$\|{\opa_b^\Phi}^* G_b^\Phi\opa_b^\Phi f\|_{s+\frac{1}{2}}\leq C_s\|\opa_b^\Phi f\|_s,$$
\end{cor}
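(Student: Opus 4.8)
The first identity is a formal consequence of the Hodge decomposition. Apply the second formula of Theorem \ref{hodge}(ii) to the $(0,1)$-form $\alpha=\opa_b^\Phi f$:
\begin{equation*}
\opa_b^\Phi f=\opa_b^\Phi{\opa_b^\Phi}^*G_b^\Phi\opa_b^\Phi f+{\opa_b^\Phi}^*\opa_b^\Phi G_b^\Phi\opa_b^\Phi f+H_b^\Phi\opa_b^\Phi f,
\end{equation*}
and show that the last two terms vanish. Since every $h\in\hc_{0,1}^\Phi(\mb)$ satisfies ${\opa_b^\Phi}^*h=0$, we have $\langle\opa_b^\Phi f,h\rangle=\langle f,{\opa_b^\Phi}^*h\rangle=0$ (first for smooth $f$, then for all $f$ by density and the $L^2$-continuity of $H_b^\Phi$), so $H_b^\Phi\opa_b^\Phi f=0$. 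For the middle term, the commutation relation of Theorem \ref{hodge}(iii) gives $G_b^\Phi\opa_b^\Phi f=\opa_b^\Phi G_b^\Phi f$, whence, using $(\opa_b^\Phi)^2=0$,
\begin{equation*}
\opa_b^\Phi G_b^\Phi\opa_b^\Phi f=\opa_b^\Phi\opa_b^\Phi G_b^\Phi f=0,
\end{equation*}
and therefore ${\opa_b^\Phi}^*\opa_b^\Phi G_b^\Phi\opa_b^\Phi f=0$. This leaves exactly $\opa_b^\Phi f=\opa_b^\Phi{\opa_b^\Phi}^*G_b^\Phi\opa_b^\Phi f$.

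For the estimate, set $\beta=\opa_b^\Phi f$ and $w=G_b^\Phi\beta$. The computation just made shows $\opa_b^\Phi w=0$, so that $\Box_b^\Phi w=\opa_b^\Phi{\opa_b^\Phi}^*w$; on the other hand Theorem \ref{hodge}(iii) yields $\Box_b^\Phi w=\Box_b^\Phi G_b^\Phi\beta=\beta-H_b^\Phi\beta=\beta$. The plan is to feed this into the sharp first-order subelliptic estimate attached to the basic a priori estimate of Theorem \ref{apriori}. At Sobolev level $s$ the standard tangential-operator argument (integration by parts against $\Lambda^{s+1/2}$, absorbing the lower-order terms via the $\tfrac12$-estimate $\|w\|_{1/2}^2\le C\,Q_\Phi(w,w)$ with $Q_\Phi(w,w)=\|\opa_b^\Phi w\|^2+\|{\opa_b^\Phi}^*w\|^2+\|w\|^2$) gives, for $(0,1)$-forms $w\in\Dom(\Box_b^\Phi)$,
\begin{equation*}
\|\opa_b^\Phi w\|_{s+\frac12}^2+\|{\opa_b^\Phi}^*w\|_{s+\frac12}^2\le C_s\big(\|\Box_b^\Phi w\|_s\,\|w\|_{s+1}+\|w\|_s^2\big).
\end{equation*}
Inserting $\Box_b^\Phi w=\beta$, together with $\|w\|_{s+1}\le C_s\|\beta\|_s$ from Theorem \ref{hodge}(i) and $\|w\|_s\le\|w\|_{s+1}\le C_s\|\beta\|_s$, yields $\|{\opa_b^\Phi}^*w\|_{s+\frac12}^2\le C_s\|\beta\|_s^2$, which is the asserted inequality $\|{\opa_b^\Phi}^*G_b^\Phi\opa_b^\Phi f\|_{s+\frac12}\le C_s\|\opa_b^\Phi f\|_s$.

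It remains to justify the uniformity of $C_s$ in $\Phi$. The basic estimate of Theorem \ref{apriori} is already uniform for $\|\Phi\|_{\cc^1}<\delta$; its passage to Sobolev level $s$ and to the sharp first-order form above requires commuting up to $s$ tangential derivatives through the $\Phi$-dependent coefficients of $\opa_b^\Phi$ and ${\opa_b^\Phi}^*$, so that the resulting error terms are controlled by $\|\Phi\|_{\cc^{2+s}}$, precisely the threshold under which Theorem \ref{hodge}(i) is itself uniform. Choosing $\delta$ so that $\|\Phi\|_{\cc^{2+s}}<\delta$ renders every constant above independent of $\Phi$. The one genuinely technical point, and the step I expect to carry the weight, is this uniform higher-order subelliptic estimate for the perturbed Kohn Laplacian; once it is in place the corollary follows from the purely algebraic manipulations of the first two paragraphs. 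All of this is carried out along the lines of Section~3 of \cite{ShWa}, adapted to $\opa_b^\Phi$ as in the proof of Theorem \ref{apriori}.
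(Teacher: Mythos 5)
Your proof is correct, and it reaches both the identity and the estimate by the same two-step pattern as the paper: a formal manipulation of Theorem \ref{hodge} followed by the classical subelliptic machinery. The small structural difference is where the Hodge decomposition is applied. The paper works in degree $0$: it writes $f={\opa_b^\Phi}^*\opa_b^\Phi G_b^\Phi f+H_b^\Phi f$, applies $\opa_b^\Phi$ (the harmonic term dies at once since in degree $0$ one has $\ker\Box_b^\Phi\subset\ker\opa_b^\Phi$), and then uses the commutation $\opa_b^\Phi G_b^\Phi=G_b^\Phi\opa_b^\Phi$ of part (iii) to move $G_b^\Phi$ past $\opa_b^\Phi$. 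You instead decompose $\alpha=\opa_b^\Phi f$ in degree $1$, which forces you to kill two terms: the harmonic projection, by orthogonality of the range of $\opa_b^\Phi$ to $\hc^\Phi_{0,1}(\mb)$ (this is fine, and in fact needs no density argument -- the adjoint identity $\langle\opa_b^\Phi f,h\rangle=\langle f,{\opa_b^\Phi}^*h\rangle$ holds for every $f\in\Dom(\opa_b^\Phi)$ by definition of the Hilbert adjoint), and the middle term, by the same commutation relation together with $(\opa_b^\Phi)^2=0$. The two routes are logically equivalent and each invokes part (iii) exactly once; the paper's degree-$0$ route is marginally shorter because only one unwanted term appears. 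For the estimate, the paper simply cites Theorem 8.4.14 of \cite{ChSh} as ``the classical way'' of deducing it from Theorem \ref{hodge}(i), whereas you reconstruct that argument (the $\Lambda^{s+1/2}$ integration-by-parts estimate $\|\opa_b^\Phi w\|_{s+1/2}^2+\|{\opa_b^\Phi}^*w\|_{s+1/2}^2\lesssim\|\Box_b^\Phi w\|_s\|w\|_{s+1}+\|w\|_s^2$ applied to $w=G_b^\Phi\opa_b^\Phi f$, for which $\Box_b^\Phi w=\opa_b^\Phi f$). Usefully, you also make explicit what the citation hides: the constants in that argument must be checked to be uniform in $\Phi$ under $\|\Phi\|_{\cc^{2+s}}<\delta$, by commuting derivatives through the $\Phi$-dependent coefficients as in the proof of Theorem \ref{apriori}. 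That uniformity is indeed the only non-formal content of the corollary, and your identification of it as the technical crux is accurate.
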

\begin{proof}
We apply Theorem \ref{hodge} in degree $0$. If $f\in L^2(\mb)$ we get
$$f={\opa_b^\Phi}^*\opa_b^\Phi G_b^\Phi f+H_b^\Phi f,$$
which implies
$$\opa_b^\Phi f=\opa_b^\Phi{\opa_b^\Phi}^*\opa_b^\Phi G_b^\Phi f+\opa_b^\Phi H_b^\Phi f.$$
Using that $\ker\Box_b^\Phi\subset\ker\opa_b^\Phi$ and that $G_b^\Phi$ exists in bidegree $(0,1)$ and commute with $\opa_b^\Phi$ since $\mb$ satisfies condition Y(1) we obtain
$$\opa_b^\Phi f=\opa_b^\Phi{\opa_b^\Phi}^* G_b^\Phi\opa_b^\Phi f.$$
The estimate can be deduced in a classical way from the estimate (i) in Theorem \ref{hodge}, see e.g. Theorem 8.4.14 in \cite{ChSh}.
\end{proof}

\subsection{Stability theorem}

Let us go back to the setting of the beginning of Section \ref{s4}. Note that if the CR manifold $(\mb, H_{0,1}\mb)$ is $2$-concave its Levi form satisfies condition Y(1) and so we can apply the previous results on the solvability of the $\opa_b^\Phi$-equation to the equations
\begin{equation}\label{dbar-proj2}
\opa_b^\Phi g_j=\opa_b^\Phi f_j=\Phi\lrcorner\pa_b f_j\quad {\rm for~all}~ j=1,\dots,N.
\end{equation}
Assume that $\|\Phi\|_{\cc^{l+2}}$ is sufficiently small to apply Corollary \ref{estimate}, then there exists a constant $C$ independent of $\Phi$
such that if $g_j$ satisfies \eqref{dbar-proj2} then
$$\|g_j\|_{\cc^l}\leq C\|\Phi\|_{\cc^{l}}\|f_j\|_{\cc^{l+1}},$$
and taking $\Phi$ even smaller $(f_1-g_1,\dots,f_N-g_N)$  will define homogeneous CR coordinates on $(\mb,\widehat H_{0,1}\mb)$.
So we have proved
\begin{thm}\label{perturbPn}
Let $(\mb,H_{0,1}\mb)$ be an abstract compact CR
manifold of class $\cc^\infty$, which is  smoothly embeddable as a
CR manifold in $\cb P^n$. Assume that $(\mb,H_{0,1}\mb)$
is $2$-concave and let $\widehat
H_{0,1}\mb$ be an horizontal perturbation of $H_{0,1}\mb$ defined by
a $(0,1)$-form $\Phi\in\cc_{0,1}^{l+2}(\mb,H_{1,0}\mb)$, $l\geq 1$.

Then there exists a positive real number $\delta$ such that if
$\|\Phi\|_{\cc^{l+2}}<\delta$, then the CR manifold $(\mb,\widehat
H_{0,1}\mb)$ is embeddable in $\cb P^n$ as a CR submanifold of class
$\cc^l$.
\end{thm}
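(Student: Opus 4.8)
The plan is to reduce the construction of the perturbed embedding to solving one tangential Cauchy--Riemann equation with estimates uniform in $\Phi$, following the setup at the beginning of Section \ref{s4}. Recall that $\ec_0$ is given by homogeneous coordinates $(f_1,\dots,f_N)$, each $f_j$ being CR for the original structure, so that $\opa_b f_j=0$. I look for perturbed homogeneous coordinates $(f_1-g_1,\dots,f_N-g_N)$ that are CR for $\widehat H_{0,1}\mb$, i.e.\ $\opa_b^\Phi(f_j-g_j)=0$. Since $\opa_b^\Phi$ differs from $\opa_b$ by $\Phi\lrcorner\pa_b$ and $\opa_b f_j=0$, this is exactly the system \eqref{dbar-proj2},
$$\opa_b^\Phi g_j=\opa_b^\Phi f_j=\Phi\lrcorner\pa_b f_j,\qquad j=1,\dots,N,$$
whose right-hand side depends linearly on $\Phi$ and is therefore small when $\Phi$ is small.

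First I would record that, $\mb$ being $2$-concave, its Levi form satisfies condition $Y(1)$ at every point; this is what makes the analytic apparatus of Theorems \ref{apriori} and \ref{hodge} and of Corollary \ref{estimate} available for the perturbed operator $\opa_b^\Phi$. I then take the explicit candidate $g_j:={\opa_b^\Phi}^*G_b^\Phi\opa_b^\Phi f_j$. By Corollary \ref{estimate}, which uses that $\opa_b^\Phi$-harmonic forms are $\opa_b^\Phi$-closed together with the fact that under $Y(1)$ the Green operator $G_b^\Phi$ exists in bidegree $(0,1)$ and commutes with $\opa_b^\Phi$, this $g_j$ solves $\opa_b^\Phi g_j=\opa_b^\Phi f_j$ and obeys the gain-$\tfrac12$ Sobolev estimate of that corollary with a constant independent of $\Phi$. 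Feeding in the linear dependence $\opa_b^\Phi f_j=\Phi\lrcorner\pa_b f_j$ and passing from the Sobolev estimate to a H\"older estimate, I expect (modulo the derivative bookkeeping discussed below) to reach
$$\|g_j\|_{\cc^l}\leq C\,\|\Phi\|_{\cc^l}\,\|f_j\|_{\cc^{l+1}},$$
with $C$ independent of $\Phi$.

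Finally I would check that the perturbed coordinates actually define a $\cc^l$ CR embedding. By construction each $f_j-g_j$ is $\opa_b^\Phi$-closed, so the components are CR for $\widehat H_{0,1}\mb$ and the image is a CR submanifold of $\cb P^n$; what remains is that the map is an embedding. The unperturbed coordinates $(f_1,\dots,f_N)$ define an embedding, meaning that on each $U_j=\{f_j\neq 0\}$ the affine chart $(f_i/f_j)_{i\neq j}$ is a diffeomorphism onto its image, and being a diffeomorphism onto the image is an open condition in the $\cc^1$, hence $\cc^l$ ($l\geq 1$), topology on a compact manifold. Since the estimate above forces $\|g_j\|_{\cc^l}\to 0$ as $\|\Phi\|_{\cc^{l+2}}\to 0$, for $\delta$ small enough and $\|\Phi\|_{\cc^{l+2}}<\delta$ the perturbed charts remain $\cc^l$-close to the original ones and stay diffeomorphisms onto their images, so $(f_1-g_1,\dots,f_N-g_N)$ is the desired $\cc^l$ CR embedding of $(\mb,\widehat H_{0,1}\mb)$ into $\cb P^n$.

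I expect the genuinely delicate part to be the uniformity in $\Phi$ of every constant, which rests on the $\Phi$-uniform a priori estimate of Theorem \ref{apriori} and the $\Phi$-uniform regularity bound of Theorem \ref{hodge}(i), and, intertwined with it, the regularity bookkeeping: turning the subelliptic Sobolev estimate of Corollary \ref{estimate} into an honest $\cc^l$ bound for $g_j$ while only controlling $\Phi$ at the level $\cc^{l+2}$ and keeping $\|\Phi\|_{\cc^l}$ on the right. Managing this loss of derivatives, through the subelliptic scale adapted to the $Y(1)$ (finite type $2$) geometry, is the real technical heart of the argument; once it is in place, the embedding conclusion follows by the openness argument above.
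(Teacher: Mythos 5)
Your proposal follows the paper's own proof essentially verbatim: reduce to the system \eqref{dbar-proj2}, note that $2$-concavity gives condition $Y(1)$ so that Corollary \ref{estimate} applies, solve $\opa_b^\Phi g_j=\Phi\lrcorner\pa_b f_j$ with the $\Phi$-uniform estimate $\|g_j\|_{\cc^l}\leq C\|\Phi\|_{\cc^l}\|f_j\|_{\cc^{l+1}}$, and conclude for $\|\Phi\|_{\cc^{l+2}}$ small. Your two additions — writing the explicit solution $g_j={\opa_b^\Phi}^*G_b^\Phi\opa_b^\Phi f_j$ and spelling out the openness argument showing the perturbed homogeneous coordinates remain an embedding — are exactly the details the paper leaves implicit, and your flagging of the Sobolev-to-H\"older bookkeeping matches the one point the paper also passes over without comment.
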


%\bibliography{biblio}
\providecommand{\bysame}{\leavevmode\hbox to3em{\hrulefill}\thinspace}
\providecommand{\MR}{\relax\ifhmode\unskip\space\fi MR }
% \MRhref is called by the amsart/book/proc definition of \MR.
\providecommand{\MRhref}[2]{%
  \href{http://www.ams.org/mathscinet-getitem?mr=#1}{#2}
}
\providecommand{\href}[2]{#2}

\enddocument

\end